\documentclass{amsart}

\usepackage[english]{babel}
\usepackage[T1]{fontenc}
\usepackage[utf8]{inputenc}
\usepackage{amsmath}
\usepackage{array}
\usepackage{amsthm}
\usepackage{amssymb}
\input xy
\xyoption{all}

\usepackage[cmtip,all]{xy}

\usepackage{stmaryrd}

\usepackage{tikz}
\usetikzlibrary{matrix,arrows,decorations.pathmorphing}
\usepackage{tikz-cd}

\usepackage{enumerate}
\usepackage{mathtools}

\usepackage{xcolor}

\usepackage[backend=bibtex]{biblatex} 
\newtheorem{thmintro}{Theorem}

\newtheorem{printro}[thmintro]{Proposition}
\newtheorem{lmintro}[thmintro]{Lemma}

\newtheorem{thm}{Theorem}[section]

\newtheorem{lm}[thm]{Lemma}

\theoremstyle{definition}
\newtheorem{defiintro}[thmintro]{Definition}
\newtheorem{rkintro}[thmintro]{Remark}
\newtheorem{questionintro}[thmintro]{Questions}
\newtheorem{exintro}[thmintro]{Example}

\newtheorem{defi-prop}[thm]{Proposition-Definition}

\theoremstyle{remark}
\newtheorem{rk}[thm]{Remark}

\newcommand{\Mod}{\mathrm{Mod}}

\newcommand{\A}{{\mathcal{A}}}
\newcommand{\B}{{\mathcal{B}}}
\newcommand{\C}{{\mathcal{C}}}

\newcommand{\D}{{\mathcal{D}}}
\newcommand{\F}{{\mathcal{F}}}

\newcommand{\I}{{\mathcal{I}}}

\newcommand{\G}{{\mathcal{G}}}

\newcommand{\T}{{\mathcal{T}}}
\newcommand{\U}{{\mathcal{U}}}

\newcommand{\Si}{\mathfrak{S}}
\newcommand{\id}{\mathrm{id}}

\newcommand{\colim}{\mathop{\mathrm{colim}}}

\newcommand{\ev}{\mathrm{ev}\,}

\newcommand{\Ext}{\mathrm{Ext}}

\newcommand{\Hom}{\mathrm{Hom}}
\newcommand{\Vect}{\mathbf{Vect}}
\newcommand{\End}{\mathrm{End}}

\newcommand{\kk}{\Bbbk}
\newcommand{\PP}{\mathcal{P}}

\usepackage[foot]{amsaddr}

\title{The universal property of strict polynomial functors}
\author{Antoine Touz\'e}

\address{Univ. Lille, CNRS, UMR 8524 - Laboratoire Paul Painlevé, F-59000 Lille, France}
\thanks{Antoine Touz\'e acknowledges the support of the CDP C2EMPI, together with the French State under the France-2030 programme, the University of Lille, the Initiative of Excellence of the University of Lille, the European Metropolis of Lille for their funding and support of the R-CDP-24-004-C2EMPI project. }

\date{June 2026}

\begin{document}

\begin{abstract}
In characteristic zero, the category of strict polynomial functors is well-known to be the tensor abelian category freely generated by one object. We show that this property fails in positive characteristic, but that it can be repaired by restricting the class of tensor abelian categories considered. The new universal property recovers several known constructions and shows that $\Ext$-algebras of strict polynomial functors act on cohomological computations in many other contexts.
\end{abstract}
\maketitle

\section{Introduction}

Let $\kk$ be a field, and let $\PP$ be the abelian category of strict polynomial functors over $\kk$, introduced by Friedlander and Suslin in \cite{FS}. If $\F$ denotes the abelian category of endofunctors of finite dimensional vector spaces, there is a (faithful, exact) forgetful functor $\U:\PP\to \F$, hence strict polynomial functors can be thought of as endofunctors of finite dimensional vector spaces equipped with an additional structure. Many endofunctors of finite dimensional vector spaces can be interpreted as strict polynomial functors, such as the $d$-th exterior power $\Lambda^d$, the $d$-th symmetric power $S^d$, the $d$-th divided power $\Gamma^d$, or more generally the Schur functors and Weyl functors introduced by Akin Buchsbaum and Weyman \cite{ABW,Wey}. In positive characteristic, homological computations in $\PP$ are a basic building block for homological computations in many contexts, see e.g. \cite{FS,FFSS,FF,TvdK,Touadv,Tousurvey,DTdm,RV}.

The category $\PP$ is naturally endowed with a tensor product operation which makes it a "tensor abelian category". Here we use these polysemic terms in the sense of Buan, Krause and Solberg \cite{BKS}. Namely, a \emph{tensor category} is a $\kk$-linear category, equipped with a symmetric monoidal product which is linear with respect to each of its arguments. 
A \emph{tensor functor} between two tensor categories is a symmetric monoidal linear functor. A \emph{morphism of tensor functors} is a natural transformation of monoidal functors. 
A \emph{tensor abelian category} is tensor category which is abelian and whose symmetric monoidal product is exact with respect to each argument. 
When $\kk$ is a field of characteristic zero $\PP$ free tensor abelian category generated $I=\Lambda^1=S^1=\Gamma^1$. This is a well-know result, which can be found under various forms in the literature see e.g. \cite{BMT}, \cite[Prop 3.3]{DM} or \cite[(6.5.1)]{SS}, and that we state as the following universal property.
\begin{printro}\label{printro:univpropcharzero}
Assume that $\mathrm{char}\,\kk=0$. For all tensor abelian categories $\A$ and for all objects $A$ of $\A$ there is a unique (up to isomorphism) exact tensor functor $\Phi:\PP\to A$ such that $\Phi(I)=A$.   
\end{printro}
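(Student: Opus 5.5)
We will use the classical structure of $\PP$ in characteristic zero. First, $\PP$ splits as $\bigoplus_{d\ge 0}\PP_d$, where $\PP_d$ is the category of homogeneous functors of degree $d$. By the Friedlander--Suslin equivalence, $\PP_d$ is equivalent to finite dimensional modules over the Schur algebra $S(n,d)=\End_{\kk\Si_d}((\kk^n)^{\otimes d})$ for $n\ge d$. Schur--Weyl duality then makes $\PP_d$ equivalent to the category of finite dimensional $\kk\Si_d$-modules. Under this equivalence a functor $F$ corresponds to $F\mapsto \Hom_{\PP}(\otimes^d,F)$, and conversely a module $M$ goes to $\otimes^d\otimes_{\kk\Si_d}M$, which equals $(\otimes^d\otimes M)^{\Si_d}$ since $\mathrm{char}\,\kk=0$. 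Because $\kk\Si_d$ is semisimple (Maschke), $\PP_d$ is semisimple. Every object is a finite direct sum of direct summands $e\cdot\otimes^d=\mathrm{Im}(e)$ of $\otimes^d=I^{\otimes d}$, with $e$ an idempotent of $\kk\Si_d$. Moreover, $\Hom_\PP(\otimes^d,\otimes^e)=\kk\Si_d$ if $d=e$ and $0$ otherwise.

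\textbf{Existence.} Let $\A$ be a tensor abelian category with object $A$. The symmetric group $\Si_d$ acts on $A^{\otimes d}$ through the symmetry, so we get a $\kk$-algebra map $\kk\Si_d\to \End_\A(A^{\otimes d})$. Define
\[\Phi(F)=\bigoplus_d A^{\otimes d}\otimes_{\kk\Si_d}\Hom_\PP(\otimes^d,F).\]
This is well defined: $M=\Hom_\PP(\otimes^d,F)$ is a finite dimensional $\kk\Si_d$-module, hence a direct summand of a free module $\kk\Si_d^{\,r}$, so the tensor product is the image of an idempotent on $(A^{\otimes d})^{\oplus r}$. Images exist in an abelian category, or equivalently we can use the additive Karoubian structure. $\Phi$ is exact, because $\PP$ is semisimple, every short exact sequence splits, and additive functors preserve split sequences. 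It satisfies $\Phi(\otimes^d)=A^{\otimes d}$, and in particular $\Phi(I)=A$.

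For the monoidal structure, we use that $\otimes^d\otimes\otimes^e=\otimes^{d+e}$ in $\PP$. Tensoring $\Hom$-modules gives $\Hom(\otimes^d,F)\otimes\Hom(\otimes^e,G)\to\Hom(\otimes^{d+e},F\otimes G)$, and this induces an isomorphism after applying $\kk\Si_{d+e}\otimes_{\kk\Si_d\otimes\kk\Si_e}-$. This is the Littlewood--Richardson induction description of the tensor product in $\PP$. On the $\A$ side, we use $A^{\otimes d}\otimes A^{\otimes e}=A^{\otimes(d+e)}$ together with the same induction. These give natural isomorphisms $\Phi(F)\otimes\Phi(G)\cong\Phi(F\otimes G)$. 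We then check the associativity and symmetry axioms. This reduces to the case $F=\otimes^d$, $G=\otimes^e$, where the axioms are the coherence of the symmetric monoidal structure of $\A$ together with the fact that the symmetry $\otimes^d\otimes\otimes^e\cong\otimes^e\otimes\otimes^d$ is the block permutation in $\Si_{d+e}$.

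\textbf{Uniqueness.} Let $\Psi$ be any exact tensor functor with an isomorphism $\Psi(I)\cong A$. The monoidal structure gives isomorphisms $\Psi(\otimes^d)\cong A^{\otimes d}$. These are $\Si_d$-equivariant because a symmetric monoidal functor preserves the symmetry. Since $\Psi$ is additive and linear, it preserves direct sums and images of idempotents. Every $F$ is, functorially, $\otimes^d\otimes_{\kk\Si_d}\Hom(\otimes^d,F)$ summed over $d$, so $\Psi(F)\cong\Phi(F)$ naturally in $F$. Naturality on morphisms follows from the computation of $\Hom$ between the $\otimes^d$ recalled above. The resulting isomorphism $\Psi\cong\Phi$ is monoidal, since both monoidal structures agree on the generators $\otimes^d$ and everything is determined by idempotent images of these.

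\textbf{Main obstacle.} The hard part is the coherence check in the existence step, specifically describing $F\otimes G$ via induction from Young subgroups and verifying the symmetry axiom. The cleanest route is to present $\PP$ as the idempotent completion of the $\kk$-linear category with objects $d\in\mathbb{N}$, morphisms $\kk\Si_d$, and monoidal product given by the block sum of permutations. That category is the free symmetric monoidal $\kk$-linear category on one object, so a symmetric monoidal functor out of it sending $1\mapsto A$ exists and is unique. Idempotent completion then extends it uniquely to $\PP$, and exactness is automatic by semisimplicity.
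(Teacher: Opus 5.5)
Your proposal is correct and follows essentially the paper's route. The paper also defines the tensor functor on $\T$ (Lemma~\ref{lm:extensionT}, which is your ``free symmetric monoidal linear category on one object''), extends it to $\PP$ by the right exact extension of Lemmas~\ref{lm:extensions} and~\ref{lm:monextensions}, of which your formula $\bigoplus_d A^{\otimes d}\otimes_{\kk\Si_d}\Hom_\PP(I^{\otimes d},F)$ is just the explicit form, and gets exactness from semisimplicity. The only difference is that the paper obtains the tensor structure and uniqueness abstractly, from uniqueness of extensions of natural transformations out of right exact functors. So your detour through the Friedlander--Suslin equivalence and the Young-subgroup (Littlewood--Richardson) induction description is not needed: your own reduction to the generators $I^{\otimes d}$ already suffices.
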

The proof of proposition \ref{printro:univpropcharzero} uses in an essential way that $\PP$ is semi-simple, which is specific to characteristic zero. The following example shows that proposition \ref{printro:univpropcharzero} fails in positive characteristic.
\begin{exintro}\label{exintro:obstruction}
Assume that $\mathrm{char}\,\kk=p> 2$, and let $s\Vect$ be the tensor abelian category of super vector spaces. There is no exact tensor functor $\Phi: \PP\to s\Vect$ such that $\Phi(I)=\kk^{0|1}$, the purely odd vector space of dimension one. 
Indeed, assume the contrary. Then $\Phi(1-\tau):\Phi(I^{\otimes 2})\to \Phi(I^{\otimes 2})$ is multiplication by $2$ (where $\tau$ is the symmetry operator in $\PP$), which implies that $\Phi(S^2)=0$. 
This implies in turn that $\Phi$ vanishes on all the Schur functors $S_\lambda$ indexed by partitions $\lambda$ of $p$, since every $S_\lambda$ with $\lambda_1>1$ is a subquotient of $S^2\otimes I^{\otimes p-2}$ by the Pieri formula, and since the only remaining Schur functor $S_{(1^p)}$ is a quotient of $S_{(2,1^{p-2})}$. But by the Pieri formula again, $I^{\otimes p}$ is, up to a filtration, isomorphic to a direct sum of Schur functors indexed by partitions of $p$, hence $\Phi$ 
vanishes on $I^{\otimes p}$, which is impossible.
\end{exintro}

The purpose of the present article is to provide a replacement for proposition \ref{printro:univpropcharzero} which is valid in all characteristics. 
As a preliminary result, we will prove the next lemma, which shows that $\PP$ is generated by $I$, regardless the characteristic of $\kk$.

\begin{lmintro}\label{lmintro:unique}
Let $\A$ be a tensor abelian category and let $\Phi,\Psi:\PP\to \A$ be two exact tensor functors. Every morphism $f:\Phi(I)\to \Psi(I)$ extends to a unique morphism of tensor functors  $\overline{f}:\Phi\to \Psi$. In particular, given an object $A$ of $\A$, there is at most one (up to isomorphism) exact tensor functor $\Phi:\PP\to \A$ such that $\Phi(I)=A$.   
\end{lmintro}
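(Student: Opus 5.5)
Since $\Phi$ and $\Psi$ are tensor functors, $\overline f$ is forced on tensor powers of $I$. For every $d$ there are structure isomorphisms $\Phi(I)^{\otimes d}\cong\Phi(I^{\otimes d})$ and $\Psi(I)^{\otimes d}\cong\Psi(I^{\otimes d})$. A monoidal natural transformation extending $f$ must therefore equal $f^{\otimes d}$ on $I^{\otimes d}$, transported through these isomorphisms; call this map $\overline f_{I^{\otimes d}}$. Because the structure maps are symmetric monoidal, $\overline f_{I^{\otimes d}}$ commutes with $\Phi(\sigma)$ and $\Psi(\sigma)$ for every permutation $\sigma\in\Si_d$. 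The first step is to check that $\overline f$ is natural with respect to all morphisms $I^{\otimes d}\to I^{\otimes d}$ in $\PP$. This holds because $\Hom_\PP(I^{\otimes d},I^{\otimes d})=\kk\Si_d$, spanned by permutations, and because morphisms between tensor powers of different degrees vanish.

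Next I extend $\overline f$ to all of $\PP$ using projective presentations. Every object of $\PP$ splits as a direct sum of homogeneous pieces of degree $d$. Both functors are additive, so it suffices to treat $\PP_d$. The key input is that $\Gamma^{d}$ evaluated on $\Hom(\kk^n,-)$, for $n\ge d$, is a projective generator of $\PP_d$. Alternatively one can use $\Gamma^{d,n}$ and its summands: in degree $d$ with $n\ge d$ these are direct summands of $\Gamma^d\circ(I^{\oplus n})$. The cleanest approach is to use the standard equivalence $\PP_d\simeq$ modules over the Schur algebra $S(n,d)=\End_{\kk\Si_d}(\kk^{n\,\otimes d})$.

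Instead of that equivalence I would use a route that is more intrinsic to tensor functors. The generator $P=\Gamma^d\circ\Hom(\kk^n,-)=\Gamma^{d}(I^{\oplus n})$ is a direct summand of a finite sum of functors of the form $\Gamma^{\mu}=\Gamma^{\mu_1}\otimes\cdots\otimes\Gamma^{\mu_k}$. Each $\Gamma^{\mu}$ is the invariants $(I^{\otimes d})^{\Si_\mu}$, that is, the kernel of the map $I^{\otimes d}\to\bigoplus_i I^{\otimes d}$ given by the morphisms $1-\sigma_i$, where the $\sigma_i$ generate the Young subgroup $\Si_\mu$. Since $\Phi$ and $\Psi$ are exact, $\Phi(\Gamma^\mu)$ and $\Psi(\Gamma^\mu)$ are the corresponding kernels in $\A$. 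So $\overline f$ is induced on $\Gamma^\mu$, and it is uniquely determined there. Hom spaces between the $\Gamma^\mu$ are spanned by composites of such inclusions, projections and permutations; equivalently, they embed into $\Hom(I^{\otimes d},I^{\otimes d})$ via the inclusions. This gives naturality on the full subcategory of direct sums of the $\Gamma^\mu$, which contains a set of projective generators.

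Finally, every $F$ has a presentation $Q_1\to Q_0\to F\to 0$ with the $Q_i$ in that subcategory. Exactness of $\Phi$ and $\Psi$ then defines $\overline f_F$ uniquely as the induced map on cokernels, and the standard argument shows it is natural and independent of the presentation. Monoidality $\overline f_{F\otimes G}=\overline f_F\otimes\overline f_G$ follows in the same way: it holds on tensor powers by construction, hence on the $\Gamma^\mu$, and then on everything because the tensor product of presentations is right exact. Uniqueness holds because every step was forced. The last claim of the lemma follows by applying the result to $f$ and $f^{-1}$.

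The main obstacle is the second paragraph: showing rigorously that morphisms between the $\Gamma^\mu$ in $\PP$ are detected by, and generated from, the $\kk\Si_d$-action on $I^{\otimes d}$. This is the Schur–Weyl type identification $\Hom_\PP(\Gamma^\mu,\Gamma^\nu)\cong\Hom_{\kk\Si_d}\big((\kk^{\otimes d})^{\Si_\mu}\text{-type permutation modules}\big)$. I would take it from Friedlander–Suslin's description of $\PP_d$ via Schur algebras.
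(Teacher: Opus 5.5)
Your proposal is correct and has the same three-stage structure as the paper's proof.

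\begin{itemize}
\item The values on the $I^{\otimes d}$ are forced and natural because $\End_\PP(I^{\otimes d})=\kk\Si_d$.
\item The values on the $\Gamma^\mu$ are forced by left exactness applied to the copresentation $0\to\Gamma^\mu\to I^{\otimes d}\to\bigoplus I^{\otimes d}$.
\item The extension to all of $\PP$ comes from presentations by sums of $\Gamma^\mu$ and right exactness.
\end{itemize}

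Your reduction to homogeneous degree and the detour through $\Gamma^d(I^{\oplus n})$ are harmless but unnecessary, since every object is already a quotient of a finite sum of $\Gamma^\mu$.

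The real difference is at the step you flag as the main obstacle: naturality with respect to an arbitrary $u:\Gamma^\mu\to\Gamma^\nu$. You propose to import the Friedlander--Suslin/Schur algebra description of $\Hom_\PP(\Gamma^\mu,\Gamma^\nu)$. There the basis is indexed by double cosets, and each basis map is a permutation followed by a relative trace, hence visibly the restriction of an element of $\kk\Si_d$. That works, but it is heavier than needed.

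The paper only uses that $I^{\otimes e}$ is injective. Since $i_\mu$ is a monomorphism, $i_\nu\circ u$ extends to some $\overline{u}:I^{\otimes d}\to I^{\otimes e}$ with $i_\nu\circ u=\overline{u}\circ i_\mu$. Naturality of $f$ on $\overline{u}$, together with $\Psi(i_\nu)$ being a monomorphism, then gives naturality on $u$. The same restriction argument gives monoidality on the $\Gamma^\mu$. So the Schur--Weyl input you planned to cite is replaced by a one-line use of injectivity, already recorded in the recollections.

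Your phrase ``they embed into $\Hom(I^{\otimes d},I^{\otimes d})$ via the inclusions'' is not quite what is true or needed. Composing with $i_\nu$ embeds $\Hom(\Gamma^\mu,\Gamma^\nu)$ into $\Hom(\Gamma^\mu,I^{\otimes e})$. What you need in addition is that every such map lifts along $i_\mu$. Thus $\Hom(\Gamma^\mu,\Gamma^\nu)$ is a quotient of a space of morphisms between tensor powers, not a subspace of $\kk\Si_d$. Likewise, the multiplications $\Gamma^a\otimes\Gamma^b\to\Gamma^{a+b}$ in your generating set are shuffle sums, not projections.
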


Thus, the obstruction to a universal property in all characteristics is really a problem of $\PP$ being \emph{freely} generated by $I$. In order to bypass this obstruction, we introduce a class of tensor abelian categories which is more closely related to the usual tensor abelian category $\Vect$ of all vector spaces.

\begin{defiintro}\label{defiintro:modeled}
We say that a tensor abelian category $\A$ is \emph{modeled on vector spaces} if there exists a family $(e_x)_{x\in X}$ of tensor functors $e_x:\A\to \Vect$ such that a diagram $0\to A'\to A\to A''\to 0$ in $\A$ is a short exact sequence if and only if 
\[0\to e_xA'\to e_xA\to e_xA''\to 0\]
is a short exact sequence of vector spaces for all $x\in X$. (We shall refer to such a family as a \emph{family of test functors}.)
\end{defiintro}

Though restricted, the class of tensor categories modeled on vector spaces contains many examples of interest, such as:
\begin{enumerate}[(1)]
\item the category $\mathrm{Sh}(X)$ of sheaves of vector spaces on a topological space $X$ (as a family of test functors, one can take the stalks at every point $x\in X$),
\item the category $B-\Mod$ of modules over a cocommutative $\kk$-bialgebra $B$ (with a single test functor given by taking the underlying vector space of a module)
\item The category $B-\mathrm{Comod}$ of comodules over a commutative $\kk$-bialgebra $B$ 
(with a single test functor given by taking the underlying vector space of a module)
\item the category $\C-\Mod$ of functors from a small category $\C$ to vector spaces, with tensor product computed objectwise $(F\otimes G)(x)=F(x)\otimes G(x)$ (a family of test functors is given by the evaluations $e_x:F\mapsto F(x)$ for all objects $x$ in $\C$),
\item the category $\PP$ itself (a family of test functors is given by the evaluations $e_n:F\mapsto F(\kk^n)$ for all $n\ge 1$),
\item any monoidal abelian subcategory of the previous ones, or any graded version of the previous ones provided the gradings are concentrated in even degrees (in order to avoid Koszul signs in the symmetry operator). 
\end{enumerate}

The first main result of this article is the following universal property, which asserts that $\PP$ is freely generated by $I$ as a tensor abelian category modeled on vector spaces. 
\begin{thmintro}\label{thmintro:main}
Let $\A$ be a tensor abelian category modeled on vector spaces, and let $A$ be an object of $\A$. There is a unique (up to isomorphism) exact tensor functor $\Phi:\PP\to A$ such that $\Phi(I)=A$.  
\end{thmintro}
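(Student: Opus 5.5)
Uniqueness is exactly Lemma \ref{lmintro:unique}, so the whole task is existence. I would construct $\Phi$ by the standard "evaluation on a generator" recipe for strict polynomial functors. Recall that $\PP$ splits as $\bigoplus_d \PP_d$, and that $\PP_d$ is equivalent to modules over the Schur algebra $S(n,d)=\Gamma^d(\End(\kk^n))$ for $n\ge d$. Equivalently, $F\in\PP_d$ is determined by the $\Gamma^{d}\End$-action on the values $F(V)$, or dually by its "coefficients" $F(V)\otimes \Gamma^{d,V}$, where $\Gamma^{d,V}=\Gamma^d\Hom(V,-)$ is the projective generator. The model for $\Phi$ is the functor $F\mapsto F(V)$ applied to a $\kk$-vector space $V$, written via the formula
\[ \Phi(F)=\Gamma^{d,\kk^n}\otimes_{S(n,d)} F \;\rightsquigarrow\; \Phi(F)= A^{\otimes d}\otimes_{\kk\Si_d}\ldots \]
I would make this precise as follows. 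The symmetric group $\Si_d$ acts on $A^{\otimes d}$ by the symmetry, and this action is the image of the $\Si_d$-action on $I^{\otimes d}$. I define $\Phi$ on the full subcategory of $\PP_d$ spanned by the objects $\Gamma^{\mu}=\Gamma^{\mu_1}\otimes\cdots\otimes\Gamma^{\mu_k}$, which form a family of projective generators. I set $\Phi(\Gamma^{\mu})=\bigotimes_i (A^{\otimes \mu_i})^{\Si_{\mu_i}}$, where invariants are taken in the abelian category $\A$ as a kernel. Then $\Phi$ extends to all of $\PP$ by right Kan extension/presentations, i.e. $\Phi(F)=\mathrm{coker}(\Phi(P_1)\to\Phi(P_0))$.

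The key point is defining $\Phi$ on morphisms. We have $\Hom_{\PP}(\Gamma^\mu,\Gamma^\nu)\cong (\Gamma^{\nu}$ evaluated on $\kk^{k}$, weight $\mu)$, which has a basis of "double coset" maps built from composites of comultiplications $\Gamma^{a+b}\to\Gamma^a\otimes\Gamma^b$, multiplications, and permutations of tensor factors. Comultiplication on $\Gamma$ corresponds to the inclusion of $\Si_{a+b}$-invariants into $\Si_a\times\Si_b$-invariants. Multiplication corresponds to the relative norm (transfer) $\sum_{\Si_{a+b}/\Si_a\times\Si_b}$, which makes sense in any $\kk$-linear category and lands in invariants. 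So all generating morphisms have images defined in $\A$. The issue is that the relations among them (the Hopf-algebra axioms of $\Gamma$, bialgebra compatibility, and so on) must hold in $\A$. This is where the test functors enter. For each $x$, $e_x$ is a tensor functor, and it is left exact on kernels by the hypothesis (exactness is detected, and $e_x$ preserves short exact sequences since $\A$ exact sequences map to exact ones). So $e_x$ commutes with invariants, $e_x((A^{\otimes a})^{\Si_a})=\Gamma^a(e_xA)$, and the identities to check become the classical identities for $\Gamma^*(V)$ with $V=e_xA$ an arbitrary vector space. These identities hold in $\Vect$, and a family $(e_x)$ that detects exactness is jointly faithful: if $e_x(f)=0$ for all $x$, then $0\to 0\to B\xrightarrow{=} B\to0$ type arguments show $\mathrm{im} f=0$. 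Hence the identities hold in $\A$, so $\Phi$ is a well-defined linear functor on the generators. By construction it comes with $e_x\circ\Phi\cong \mathrm{ev}_{e_xA}$, where $\mathrm{ev}_V:\PP\to\Vect$ is $F\mapsto F(V)$, extended to infinite-dimensional $V$ by filtered colimits.

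For exactness, let $\Phi$ on all of $\PP$ be the right exact extension. Given a short exact sequence in $\PP$, its image is exact in $\A$ iff exact after each $e_x$. Since $e_x\Phi\cong\mathrm{ev}_{e_xA}$, which is exact, $\Phi$ is exact. The tensor structure $\Phi(F)\otimes\Phi(G)\cong\Phi(F\otimes G)$ is defined on generators by the obvious identification $\Gamma^\mu\otimes\Gamma^\nu=\Gamma^{\mu\nu}$ and extended by exactness. Symmetry and associativity coherence are again checked after the faithful $e_x$, where they reduce to those of $\mathrm{ev}_V$. Finally $\Phi(I)=\Phi(\Gamma^1)=A$. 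The main obstacle I expect is the careful bookkeeping that $e_x$ preserves the invariants, images, and cokernels used, together with verifying that the chosen generators and relations really present $\Gamma^{d}$-morphisms (so that $\Phi$ is well defined rather than merely defined on a free category). I would handle this by citing the standard presentation of $\PP$ via $\Gamma^{d,\kk^n}$ and Schur algebras, and then using joint faithfulness to transport every relation.
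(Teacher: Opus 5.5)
Your proposal is correct and follows the paper's overall skeleton; the genuine difference lies in how you make $\Phi$ a tensor functor on $\G$. The shared skeleton is:
uniqueness from Lemma~\ref{lmintro:unique};
$\Phi(\Gamma^\mu)$ defined as the $\Si_\mu$-invariants of $A^{\otimes d}$, computed as a kernel in $\A$ (this is the paper's $\mathrm{Ker}\,\Phi_\T(d_\mu)$, and it agrees with your $\bigotimes_i(A^{\otimes\mu_i})^{\Si_{\mu_i}}$ since $\otimes$ is exact);
right exact extension to $\PP$ through presentations by the $\Gamma^\mu$;
and exactness from $e_x\circ\Phi\simeq\mathrm{ev}_{e_xA}$, together with the fact that the $e_x$ reflect exactness. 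As in Lemma~\ref{lm:classifVect}, $\mathrm{ev}_V$ for infinite-dimensional $V$ is defined by filtered colimits.

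You take the standard basis of $\Hom_\PP(\Gamma^\mu,\Gamma^\nu)$ given by composites of comultiplications, permutations and multiplications. You send these to inclusions of invariants, symmetries and transfers in $\A$, and check functoriality and the tensor axioms after applying the jointly faithful family $(e_x)$. The paper instead starts from $\T$, which needs only $\End_\PP(I^{\otimes d})\simeq\kk\Si_d$. It defines $\Phi_\G(u)$ by lifting $u$ to the injective copresentations $0\to\Gamma^\mu\to I^{\otimes d}\to\bigoplus I^{\otimes d}$; the lift is unique up to homotopy because $I^{\otimes d}$ is injective. Consequently Lemma~\ref{lm:extTG} holds in an arbitrary tensor abelian $\A$, and the test functors are used only once, for exactness.

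Your route gives explicit formulas for $\Phi$ on the structure maps of divided powers. The price is importing the basis theorem for morphisms between divided powers, and checking that the identifications $e_x\Phi(\Gamma^\mu)\simeq\Gamma^\mu(e_xA)$ are compatible with every generating map and with the tensor structures. That use of faithfulness is actually dispensable. Each of your basis maps is the restriction to $\Si_\mu$-invariants of an element of $\kk\Si_d$, so your $\Phi$ coincides with the paper's $\Phi_\G$, and the relations hold in any $\A$.

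Finally, the isomorphism $e_x\circ\Phi\simeq\mathrm{ev}_{e_xA}$ holds only ``by construction'' on $\G$. It extends to all of $\PP$ because both functors are right exact, by the uniqueness part of Lemma~\ref{lm:extensions}. This is exactly the last step of the paper's argument, and you should state it explicitly.
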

Theorem \ref{thmintro:main} gives a conceptual construction of several tensor exact functors that can be found in the literature. For example, when $\A=\U_2$ is the category of unstable modules at the prime $2$ (this is a full tensor abelian subcategory of the category of modules over the Steenrod algebra) and $A=F(1)$ is the standard projective, we recover the functor $\overline{m}$ constructed in \cite{Hai}. When $\A=\PP^{\mathrm{ev}}$ is the category of graded strict polynomial functors concentrated in even degrees, and $A=V\otimes I$, where $V$ is a finite dimensional graded vector space concentrated in even degrees, we obtain an alternative construction of the parametrization functor introduced in \cite{Touens}, which plays a fundamental role in computing cohomology of classical groups with Frobenius twisted coefficients, see \cite{Tousurvey} for a survey.     

Since the functor $\Phi$ given by theorem \ref{thmintro:main} is exact, it induces a morphism of graded vector spaces $\Ext^*_\PP(F,G)\to \Ext^*_\A(\Phi(F),\Phi(G))$ for all strict polynomial functors $F$ and $G$. This implies that $\Ext$-algebras in $\PP$ act on many $\Ext$ computed in other tensor abelian categories. For instance graded vector spaces $\Ext^*_\A(B,\Phi(G))$ are modules over $\Ext^*_\PP(G,G)$. The interest of this observation lies in the fact that much is known about concrete $\Ext$-computations in $\PP$, see e.g. \cite{FS,FFSS,Tousurvey}.

\subsection{Faithfulness of $\Phi$ and tensor ideals of $\PP$}
In order to study faithfulness of the exact tensor functors contructed by theorem \ref{thmintro:main}, we rely on the categories of strict polynomial functors of bounded domain recently introduced in \cite{CJ}. Thus $\PP^{\le n}$ is the tensor abelian category obtained as the quotient of $\PP$ by the tensor ideal $\I_n$ of the strict polynomial functors satisfying $F(\kk^{n})=0$. We denote by $\pi_n:\PP\to \PP^{\le n}$ the quotient functor. For notational convenience, we also denote by $\I_\infty$ the zero ideal, hence $\pi_\infty:\PP\to \PP^{\le \infty}$ is an equivalence of tensor abelian categories.
\begin{printro}\label{printro:factorization}
Let $\A$ be a category modeled on vector spaces. For all exact tensor functors $\Phi:\PP\to \A$ we have $\mathrm{Ker}\,\Phi =\I_n$ for some $n\in\mathbb{N}\cup\{+\infty\}$. Hence $\Phi$ factors through $\pi_n$ and the quotient $\overline{\Phi}:\PP^{\le n}\to \A$ is a faithful exact tensor functor.
\end{printro}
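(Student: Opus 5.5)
The plan is to reduce to the test functors of Definition \ref{defiintro:modeled} and then to a rank computation. Since $\Phi$ is exact and tensor, $\mathrm{Ker}\,\Phi$ is a Serre subcategory of $\PP$ stable under tensoring with arbitrary objects, i.e. a tensor ideal. Because $\A$ is modeled on vector spaces, $\Phi(F)=0$ if and only if $e_x\Phi(F)=0$ for all $x\in X$. (Apply the defining property to $0\to 0\to \Phi F\to 0\to 0$: it is exact iff each $e_x\Phi F=0$.) Moreover, the defining property shows that each composite $e_x\circ\Phi:\PP\to\Vect$ is an exact tensor functor. Hence
\[
\mathrm{Ker}\,\Phi=\bigcap_{x\in X}\mathrm{Ker}(e_x\circ\Phi).
\]
Since the ideals $\I_n$ are totally ordered by inclusion, $\I_\infty=0\subset\cdots\subset\I_{n+1}\subset\I_n\subset\cdots\subset \I_0$, and the set $\mathbb{N}\cup\{+\infty\}$ is well ordered, any intersection of ideals of the form $\I_n$ is again of this form. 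It is therefore enough to treat $\A=\Vect$.

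So let $\Psi:\PP\to\Vect$ be an exact tensor functor and set $V=\Psi(I)$. First, $V$ must be finite dimensional. Indeed, $\Psi(\Lambda^d)$ is a summand or subquotient of $V^{\otimes d}$ cut out by antisymmetrization, and exactness plus the symmetric monoidal structure identify $\Psi(\Lambda^d)\cong\Lambda^d V$. The same holds for all functors which are images of symmetrizers applied to $I^{\otimes d}$. If $\dim V$ were infinite, no exterior power would vanish; in that case we would aim to show $\Psi$ is faithful, i.e. $\mathrm{Ker}\,\Psi=\I_\infty$.

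In general, the key step is to compare $\Psi$ with evaluation $\mathrm{ev}_V:F\mapsto F(V)$, which is an exact tensor functor $\PP\to\Vect$ sending $I$ to $V$, when $V$ is finite dimensional. By Lemma \ref{lmintro:unique} this gives $\Psi\cong\mathrm{ev}_V$. Then $\mathrm{Ker}\,\Psi=\{F: F(\kk^n)=0\}=\I_n$ with $n=\dim V$.

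For infinite dimensional $V$, I would write $V$ as a filtered colimit of finite dimensional subspaces and use $\Psi\cong\colim \mathrm{ev}_{V_\alpha}$. Since strict polynomial functors of degree $d$ extend to all vector spaces commuting with filtered colimits, the same uniqueness argument applies. Here $F(V)=0$ iff $F(\kk^n)=0$ for all $n$, which forces $F=0$ because the evaluations $e_n$ are jointly faithful on $\PP$ (example (5)). So the kernel is $\I_\infty$.

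The main obstacle is making Lemma \ref{lmintro:unique} apply when $V$ is infinite dimensional. The extended evaluation functor must be shown to be an exact tensor functor, and uniqueness needs no finiteness; I would verify exactness degreewise, using that filtered colimits are exact in $\Vect$.

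Finally, once $\mathrm{Ker}\,\Phi=\I_n$, the universal property of the Serre quotient gives a factorization $\Phi=\overline{\Phi}\circ\pi_n$, with $\overline{\Phi}$ exact. $\overline{\Phi}$ is a tensor functor because $\pi_n$ is a localization compatible with the tensor structure. It is faithful because an exact functor out of a quotient by exactly its kernel kills no nonzero object, and an exact functor between abelian categories that kills no nonzero object is faithful: it kills no nonzero image of a morphism.
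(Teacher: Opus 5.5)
Your proposal is correct and follows the paper's route. You reduce via the test functors to exact tensor functors $\PP\to\Vect$, identify each one with $\mathrm{ev}_V$ by Lemma \ref{lmintro:unique}, using the filtered-colimit extension of evaluation (which is exactly the paper's Lemma \ref{lm:classifVect}), and conclude that $\mathrm{Ker}\,\Phi=\I_{\sup\dim V_x}$ by monotonicity of the $\I_n$.

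Two fixes. First, delete the false aside ``$V$ must be finite dimensional'': exact tensor functors $\mathrm{ev}_V$ with $\dim V=\infty$ exist, as your own later paragraph uses. Second, the property that makes the intersection work is not well-ordering but the retract argument: $\mathrm{ev}_U$ is a retract of $\mathrm{ev}_V$ whenever $U$ is a direct summand of $V$. This yields both $\I_{n+1}\subset\I_n$ and the injection $F(\kk^n)\hookrightarrow F(V)$ you need in the infinite-dimensional case.
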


Notice that $\PP$ has a wealth of tensor ideals different from the ideals $\I_n$. For example every strict polynomial functor $F$ generates an ideal $\I_F$ which equals $\I_n$ if and only if $F\simeq \Lambda^n$. Hence proposition \ref{printro:factorization} shows that exact tensor functors with codomain modeled on vector spaces have a rigid structure. It also shows that for all ideals $\I\ne \I_n$, the quotient category $\PP/\I$ is not modeled on vector spaces (since the kernel of the tensor exact functor $\PP\to \PP/\I$ is different from $\I_n$).
\begin{rkintro}
The factorization $\Phi=\overline{\Phi}\circ \pi_n$ implies that the action of $\Ext$-algebras in $\PP$ on extensions in $\A$ factors through $\PP^{\le n}$. This gives an additional motivation for the study of Ext-algebras in $\PP^{\le n}$ started in \cite{CJ}.   
\end{rkintro}


\subsection{Admissible tensor abelian categories} The class of tensor abelian categories modeled on vector spaces is rather concrete, but far from optimal. We now try to replace it with a larger class of tensor abelian categories for which the universal property remains valid.
\begin{defiintro}
We say that a tensor abelian category $\A$ is \emph{admissible} if for all objects $A$ of $\A$ there exists an exact tensor functor $\Phi:\PP\to \A$ with $\Phi(I)=A$.
\end{defiintro}

Theorem \ref{thmintro:main} asserts that tensor abelian categories modeled on vector spaces are admissible. Our second main result is the following theorem, which provides numerous additional admissible categories. For example, theorem \ref{thmintro:secondmain}\eqref{item2} shows that all the quotients $\PP/\I$ are admissible, although it follows from proposition \ref{printro:factorization} that only the quotients $\PP/\I_n$ are modeled on vector spaces. Theorem \ref{thmintro:secondmain}\eqref{item3} shows that admissible tensor abelian categories are stable under many standard constructions such as subcategories, products, or functor categories.

\begin{thmintro}\label{thmintro:secondmain}
Let $\A$ be a tensor abelian category.
\begin{enumerate}[(1)]
\item If $\A$ is modeled on vector spaces, then $\A$ is  admissible.
\item\label{item2} If $\A$ is admissible, then every quotient $\A/\I$ by a tensor ideal $\I$ is admissible.
\item\label{item3} Assume that there is a family of tensor functors $(e_x:\A\to \B_x)_{x\in X}$ where the $\B_x$ are admissible tensor abelian categories, such that a diagram $0\to A'\to A\to A''\to 0$ in $\A$ is a short exact sequence if and only if 
\[0\to e_xA'\to e_xA\to e_xA''\to 0\]
is a short exact sequence in $\B_x$ for all $x\in X$. Then $\A$ is admissible.
\end{enumerate}
\end{thmintro}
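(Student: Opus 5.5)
We treat the three items separately. Item (1) is exactly Theorem \ref{thmintro:main}, so nothing new is needed there.

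For item \eqref{item2}, let $q:\A\to\A/\I$ be the quotient functor. Since $\I$ is a tensor ideal, it is a Serre subcategory stable under $\otimes$. Hence $\A/\I$ inherits a tensor abelian structure for which $q$ is an exact tensor functor, and every object of $\A/\I$ is of the form $q(A)$. Given an object $q(A)$, admissibility of $\A$ provides an exact tensor functor $\Phi:\PP\to\A$ with $\Phi(I)=A$. Then $q\circ\Phi$ is an exact tensor functor with $q\Phi(I)=q(A)$. The only point to check is the standard fact that the tensor product descends to the Serre quotient and stays exact in each variable; this uses the ideal property.

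Item \eqref{item3} is the substantial part. First, the hypothesis forces each $e_x$ to be exact. Moreover, the family $(e_x)$ is jointly conservative and jointly faithful. Indeed, $0\to C\to 0\to 0\to 0$ is short exact iff $C=0$, so $C=0$ iff $e_xC=0$ for all $x$. Applying this to $C=\mathrm{Im}\,f$ shows that $f=0$ iff $e_x f=0$ for all $x$.

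Fix an object $A$ of $\A$. For each $x$, admissibility of $\B_x$ gives an exact tensor functor $\Psi_x:\PP\to\B_x$ with $\Psi_x(I)=e_xA$. We construct $\Phi$ degree by degree, using that $\PP_d$ has the projective generators $\Gamma^{d,n}=\Gamma^d\Hom(\kk^n,-)$, for $n\ge d$, with $\End(\Gamma^{d,n})\cong\Gamma^d\End(\kk^n)=S(n,d)$.
\begin{enumerate}[(a)]
\item Define $\Phi(\Gamma^{d,n}):=\Gamma^d(A^{\oplus n})$, the invariants $(A^{\oplus n})^{\otimes d}{}^{\Si_d}$, which is a finite intersection of kernels of $1-\sigma$ and hence makes sense in any tensor abelian category.
\item Define an algebra map $S(n,d)\to\End_\A(\Gamma^d(A^{\oplus n}))$. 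Matrices in $\End(\kk^n)$ act on $A^{\oplus n}$, so pure tensors $g^{\otimes d}$ act on $(A^{\oplus n})^{\otimes d}$ preserving invariants. Since $\Gamma^d\End(\kk^n)$ is spanned by such elements together with the symmetrizations of $g_1\otimes\cdots\otimes g_d$, we let the latter act through the corresponding sums over $\Si_d$.
\item Extend to all of $\PP_d$ by the right exact formula $\Phi(F)=\Phi(\Gamma^{d,n})\otimes_{S(n,d)}\Hom(\Gamma^{d,n},F)$. Concretely, take a presentation of $F$ by sums of $\Gamma^{d,n}$ and define $\Phi(F)$ as a cokernel. Finally set $\Phi=\bigoplus_d\Phi_d$.
\end{enumerate}

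Every identity needed is an equality of morphisms or the vanishing of a homology object in $\A$. This covers well-definedness of the algebra map and independence of $n$, functoriality, exactness, the monoidal structure maps $\Phi(F)\otimes\Phi(G)\cong\Phi(F\otimes G)$, and their compatibility with the symmetry. By joint faithfulness and conservativity, it suffices to check each after applying every $e_x$. Since $e_x$ is an exact tensor functor, it commutes with all the constructions above, so $e_x\Phi$ is the same construction performed in $\B_x$ starting from $e_xA$.

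The key step, and the main obstacle, is to show that this construction in $\B_x$ agrees with $\Psi_x$. Because $\Psi_x$ is exact and tensor, $\Psi_x(\Gamma^d(\kk^n\otimes I))\cong\Gamma^d((e_xA)^{\oplus n})$ via the same invariant description, and $\Psi_x$ induces the same action of $S(n,d)$. Right exactness of $\Psi_x$ then identifies $\Psi_x(F)$ with the cokernel formula, and the required identities hold in $\B_x$ because $\Psi_x$ is an exact tensor functor. I expect the delicate point to be step (b) together with this comparison. One must make sure that the $S(n,d)$-action is defined intrinsically in $\A$, not just on generators $g^{\otimes d}$, whose linear relations in $\Gamma^d\End(\kk^n)$ are not free. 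I would handle this by defining the action through the symmetrization formula and verifying all relations after applying the $e_x$, where they become relations already satisfied by $\Psi_x$.
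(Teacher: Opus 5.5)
There is a genuine gap in item (3), and a smaller one in item (1). Item (2) is exactly the paper's argument. For item (3), your construction of the underlying functor and your proof of its exactness follow the paper's argument in different packaging:
\begin{enumerate}[(i)]
\item define $\Phi$ on projective generators as invariant subobjects of tensor powers of $A$;
\item extend right exactly;
\item identify $e_x\circ\Phi$ with $\Psi_x$, since both are right exact and agree on generators;
\item conclude because the $e_x$ jointly reflect exactness.
\end{enumerate}
Your $\Gamma^{d,n}=\Gamma^d(I^{\oplus n})$ is a direct sum of the paper's $\Gamma^\mu$, and your $S(n,d)$-action is what the paper obtains by restricting maps between tensor powers of $I$.

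The gap is the tensor structure. Admissibility requires an exact \emph{tensor} functor, and you never construct $\nabla\colon\Phi(F)\otimes\Phi(G)\to\Phi(F\otimes G)$ in $\A$. You list it among the things to be checked after applying the $e_x$. But joint faithfulness and conservativity only let you verify that given morphisms agree or are invertible; they cannot produce a morphism in $\A$. Nor can the tensor structure of $\Psi_x$ be transported from $\B_x$ back to $\A$. Moreover, with your degreewise Morita definition, $\Phi(\Gamma^{d,n}\otimes\Gamma^{e,m})$ is only defined as a tensor product over $S(N,d+e)$ with the chosen generator of degree $d+e$. So there is not even an evident candidate map from $\Gamma^d(A^{\oplus n})\otimes\Gamma^e(A^{\oplus m})$ on generators. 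Getting one would first require identifying $\Phi$ of every summand $\Gamma^\lambda$ of $\Gamma^{d+e,N}$ with an invariant subobject of $A^{\otimes(d+e)}$, compatibly with all morphisms.

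The repair is the paper's organization: use the full subcategory $\G$ of all $\Gamma^\mu$, which is closed under $\otimes$.
\begin{enumerate}[(i)]
\item Let $\Phi(\Gamma^\mu)$ be the $\Si_\mu$-invariant subobject of $A^{\otimes d}$.
\item Let $u\colon\Gamma^\mu\to\Gamma^\nu$ act by restricting the image of any extension $\overline{u}\colon I^{\otimes d}\to I^{\otimes e}$. Such an extension exists by injectivity, and different choices agree on the invariant subobject.
\item Define $\nabla$ on $\Gamma^\mu,\Gamma^\nu$ as the restriction of $A^{\otimes d}\otimes A^{\otimes e}=A^{\otimes(d+e)}$. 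This makes sense because exactness of $\otimes$ in $\A$ identifies $\Phi(\Gamma^\mu)\otimes\Phi(\Gamma^\nu)$ with a kernel inside $A^{\otimes(d+e)}$.
\end{enumerate}
Naturality and the tensor axioms then hold intrinsically, as restrictions of identities between tensor powers. Lemma \ref{lm:monextensions} extends $\nabla$ to $\PP$ using right exactness of $\otimes$ in each variable. The $e_x$ are then needed only for exactness, where an isomorphism $e_x\circ\Phi\cong\Psi_x$ of plain functors suffices.

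This also removes your worry in step (b). $\End(\kk^n)^{\otimes d}$ acts on $(A^{\oplus n})^{\otimes d}$ by tensor products of scalar matrices, $\Si_d$-equivariantly. Hence its invariants $S(n,d)$ act on $\Gamma^d(A^{\oplus n})$ by restriction, with no relations to verify. By contrast, the $g^{\otimes d}$ together with full $\Si_d$-symmetrizations need not span $\Gamma^d\End(\kk^n)$, for instance over $\mathbb{F}_2$ with $d=3$.

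For item (1), you cannot simply quote theorem \ref{thmintro:main}, since the paper deduces it from (3). You need (3) with $\B_x=\Vect$, together with admissibility of $\Vect$. That includes infinite-dimensional $V$, since test functors land in all vector spaces; there $\ev_V$ is the filtered colimit of the $\ev_U$ over finite-dimensional $U\subset V$.
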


Although theorem \ref{thmintro:secondmain} considerably improves theorem \ref{thmintro:main}, it does not provide a satisfactory description of the class of admissible tensor abelian categories. This leaves the following challenging questions open.
\begin{questionintro}
\begin{enumerate}[(A)]
\item Do there exist admissible categories which cannot be constructed from categories modeled on vector spaces by the operations \eqref{item2} and \eqref{item3} of theorem \ref{thmintro:secondmain}?
\item What are the examples of non-admissible tensor abelian categories in characteristic $2$? In odd characteristic, are there non-admissible tensor abelian categories which do not contain the tensor abelian category of finite dimensional super vector spaces  as a subquotient?
\end{enumerate}    
\end{questionintro}

\subsection{Relation with recent work of Coulembier \cite{Cou}} Let $\PP_d$ be the full abelian subcategory of $\PP$ whose objects are the subquotients of direct sums of $I^{\otimes d}$. In \cite{Cou}, Coulembier defines a category of \emph{universal polynomial functors} $\mathrm{UPol}_d$, which has $\PP_d$ as a quotient and which satisfies by definition a certain universal property with respect to all tensor categories in the sense of \cite{EGNO} (that is, with respect to the tensor abelian categories in the sense of the present article, which are in addition rigid and locally finite). The perspective of the present article rather goes in the opposite direction: instead of introducing a new  bigger category of universal polynomial functors, we provide a (restricted) universal property for the familiar category of strict polynomial functors. 

Our universal property is concretely connected to the universal polynomial functors of Coulembier as follows. For all objects $A$ of an admissible category $\A$, fix an exact tensor functor $\Phi^\A_A:\PP\to \A$ such that $\Phi^\A_A(I)=A$. By lemma \ref{lmintro:unique}, $\Phi_A^\A$ is natural with respect to $A$, hence for all strict polynomial functors $F$, there is a well-defined functor $F^\A:\A\to \A$ such that $F^\A(A)=\Phi_A^\A(F)$. Moreover for all tensor exact functors $T:\A\to \B$ the morphism $\id_{T(A)}$ extends to a unique isomorphism of tensor abelian functors $T\circ \Phi_A^\A\simeq \Phi_{T(A)}^\B$ which yields by evaluation on $F$ a natural isomorphism $\eta^T:T\circ F^\A\simeq F^\B\circ T$. One readily checks that the data of the $F^\A$ and the $\eta^T$ yield a universal polynomial functor in the sense of \cite[Section 8.2.1]{Cou}, albeit valid for all admissible tensor abelian categories $\A$ (instead of all rigid locally finite tensor abelian categories). 


\subsection{Strategy of proof and organization of the article} The remainder of the article is devoted to the proof of the results presented in this introduction. Let $\T$ be the full tensor subcategory of $\PP$ generated by $I$, and let $\G$ be the one generated by the divided powers $\Gamma^d$. There is an inclusion of tensor categories
\[\T\subset \G\subset \PP\;.\]
We construct the exact tensor functor $\Phi:\PP\to \A$ such that $\Phi(I)=A$ in several steps. We first construct a tensor functor $\Phi_\T:\T\to \A$ such that $\Phi(I)=A$. This first step essentially relies on  Schur-Weyl duality (see remark \ref{rk:schurweyl}). Then we extend (essentially by `left deriving') $\Phi_\T$ to a tensor functor $\Phi_\G:\G\to \A$ . Finally we prove (essentially by `right deriving') that such a tensor functor extends to a tensor functor $\Phi:\PP\to \A$ which is right exact. The condition that $\A$ is modeled on vector spaces is then the key technical point which ensures that $\Phi$ is actually exact.  
\begin{rkintro}
In view of the strategy of construction of $\Phi$, one might hope to obtain a universal property of $\PP$ with respect to all tensor abelian categories by considering only the right exact tensor functors $\Phi$. But that wouldn't do it, since there is no uniqueness of right exact tensor functors $\Phi:\PP\to \A$ such that $\Phi(I)=A$, as observed in remark \ref{rk:rightexactnonunique}.
\end{rkintro}

Section \ref{sec:tensabcat} recalls basic facts regarding tensor abelian categories and proves a general extension result in lemma \ref{lm:monextensions}, which allows to extend a tensor functor $\Phi:\G\to \A$ to a right exact tensor functor $\Phi:\PP\to \A$. Section \ref{sec:proplm} recalls the basic properties of the category $\PP$ which we will need, introduces the categories $\T$ and $\G$, and deduces from them the universal property in characteristic zero (given in proposition \ref{printro:univpropcharzero}) and the uniqueness lemma \ref{lmintro:unique}. Theorems \ref{thmintro:main} and \ref{thmintro:secondmain}, the main results, are proved at the same time in section \ref{sec:proofmain}. The last section deals with the proof of proposition \ref{printro:factorization}.

\subsection{Notations and terminology}
Throughout the article, we work over a fixed ground field $\kk$. Thus we say vector space, algebra, linear category, linear functor for $\kk$-vector space, $\kk$-algebra, $\kk$-linear categories, $\kk$-linear functors. We denote by $\Si_d$ the symmetric group on $d\ge 0$ letters, with $\Si_0=\Si_1=\{1\}$.

\section{Tensor abelian categories}\label{sec:tensabcat}
\subsection{Tensor categories}\label{subsec:recolltensab}
We take \cite[Chap 2 and 8]{EGNO} as a reference for symmetric monoidal categories, since the presentation given there minimizes the use of commutative diagrams. In the context of linear categories and linear functors over the ground field $\kk$, we will use the terms tensor categories and tensor functors instead of symmetric monoidal categories and symmetric monoidal functors (warning: our tensor categories are more general than the tensor categories of \cite[Chap 4]{EGNO} since we don't impose local finiteness, rigidity and triviality of $\End(\mathbf{1})$).

Thus, a \emph{tensor category} is a tuple $(\C,\otimes, \mathbf{1}, \alpha,\iota,\tau)$ where $\C$ is a linear category, $\otimes:\C\times\C\to \C$ is a bilinear functor, $\mathbf{1}$ is an object of $\C$ called the unit, and $\alpha: (x\otimes y)\otimes z\to x\otimes (y\otimes z)$, $\iota:\mathbf{1}\otimes \mathbf{1}\to \mathbf{1}$, and $\tau:x\otimes y\to y\otimes x$ are natural isomorphisms, which are required to satisfy the pentagon axiom and the unit axiom \cite[Def 2.1.1]{EGNO}, the hexagon axiom \cite[Def 8.1.1]{EGNO} and the identity $\tau^2=\id$. In addition, from the axioms of tensor categories, one can define \cite[Def 2.2.1]{EGNO} left and right unit isomorphisms $\lambda:\mathbf{1}\otimes x\to x$ and $\rho:x\otimes\mathbf{1}\to x$. A tensor category will often be simply denoted by a single letter $\C$. 

Given two tensor categories $\C$ and $\D$, a \emph{tensor functor} is a pair $(\Phi,\nabla)$ where $\Phi:\C\to \D$ is a linear functor such that $\Phi(\mathbf{1})$ is isomorphic to the unit of $\D$, and $\nabla:\Phi x\otimes\Phi y\to \Phi(x\otimes y)$ is a natural isomorphism such that the following two diagrams commute (for reasons of space and typography, we often write $\Phi x$ instead of $\Phi(x)$ if this causes no risk of confusion).
\[\begin{tikzcd}
(\Phi x\otimes\Phi y)\otimes \Phi z \ar{r}{\alpha}\ar{d}{\nabla\otimes \id} & \Phi x\otimes(\Phi y\otimes \Phi z)\ar{d}{\id\otimes\nabla}\\
\Phi(x\otimes y)\otimes \Phi z\ar{d}{\nabla} & \Phi x\otimes \Phi (y\otimes z)\ar{d}{\nabla}\\\Phi((x\otimes y)\otimes z)\ar{r}{\Phi(\alpha)}&\Phi(x\otimes (y\otimes z))
\end{tikzcd}
\qquad
\begin{tikzcd}
\Phi x\otimes \Phi y \ar{r}{\tau}\ar{d}{\nabla} & \Phi y\otimes \Phi x\ar{d}{\nabla}\\
\Phi(x \otimes y)\ar{r}{\Phi(\tau)} & \Phi(y\otimes x)
\end{tikzcd}
\]
We will usually denote a tensor functor by a single letter $\Phi$, and sometimes refer to $\nabla$ as the \emph{tensor structure on $\Phi$}

Given two tensor functors $\Phi,\Psi:\C\to \D$, a \emph{morphism of tensor functors} $f:\Phi\to \Psi$ is a natural transformation such that $f:\Phi\mathbf{1}\to \Psi\mathbf{1}$ is an isomorphism, and such that the following diagram commutes.
\[
\begin{tikzcd}
\Phi x\otimes\Phi y\ar{r}{f\otimes f}\ar{d}{\nabla}& \Psi x\otimes\Psi y \ar{d}{\nabla}\\
\Phi (x\otimes y)\ar{r}{f}& \Psi (x\otimes y)
\end{tikzcd}
\]

MacLane's strictness theorem \cite[Chap XI.3]{MLcat} shows that every tensor category can be replaced, up to an equivalence of tensor categories, by a strict one, i.e. by a tensor category in which the isomorphisms $\alpha$, $\iota$, $\lambda$, $\rho$ are identities. This strictness theorem considerably reduces the notational complexity of proofs. Therefore in the sequel of the article {\bf we will always consider without further notice that our tensor categories are strict}.
In particular, for all $d\ge 0$ every object $x$ has tensor powers $x^{\otimes d}$ (with $x^{\otimes 0}=\mathbf{1}$ as usual). There is a canonical morphism of monoids
\[\mathrm{can}:\Si_d\to \End_\C(x^{\otimes d})\]
such that for $d\ge 2$ and if $\sigma=(i,i+1)$ is the transposition which swaps $i$ and $i+1$, the map $\mathrm{can}(\sigma)$ equals $\id_{i-1}\otimes \tau\otimes \id_{d-i-1}$, where $\id_k$ is the identity of $x^{\otimes k}$. If $\Phi:\C\to \D$ is a tensor functor, there is a canonical isomorphism
\[\nabla_d:(\Phi x)^{\otimes d}\xrightarrow[]{\simeq} \Phi(x^{\otimes d})\]
such that $\nabla_1=\id$ and $\nabla_{d+1}=\nabla \circ (\nabla_d\otimes\id)$, and $\nabla_0:\mathbf{1}\to \Phi(\mathbf{1})$ is the unique isomorphism such that the following composition is the identity: 
\[\Phi x=\mathbf{1}\otimes \Phi x\xrightarrow[]{\nabla_0\otimes\id}\Phi \mathbf{1}\otimes \Phi x\xrightarrow[]{\nabla}\Phi(\mathbf{1}\otimes x)=\Phi x\;.\] 
It easily follows from the axioms that $\nabla_d$ is natural with respect to $x$ and $\Phi$, and that is preserves the action of $\Si_d$.

\subsection{An extension property for tensor abelian categories} We first recall a standard extension property for abelian categories, which is essentially given by the theory of right derived functors see e.g. \cite[Thm 8.7.2]{KS}:

\begin{lm}\label{lm:extensions}
Let $\A$ and $\B$ be two abelian linear categories, and let $\G\subset \A$ be a full subcategory of $\A$ whose objects are projective and such that every object of $\A$ is a quotient of a finite direct sum of objects of $\G$. 
Then the following holds. 
\begin{enumerate}
\item Every linear functor $\Phi:\G\to \B$ extends to a right exact linear functor $\overline{\Phi}:\A\to \B$.
\item Moreover, let us denote by $\Psi_{|\G}$ the restriction to $\G$ of an arbitrary linear functor $\Psi:\A\to \B$. Then every natural transformation $f:\Phi\to \Psi_{|\G}$ extends uniquely to a natural transformation $\overline{f}:\overline{\Phi}\to \Psi$.
\end{enumerate}
\end{lm}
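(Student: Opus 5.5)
The plan is the standard construction of a functor out of projective presentations. For every object $a$ of $\A$ we fix, once and for all, a presentation
\[P_1(a)\xrightarrow{\;\partial_a\;}P_0(a)\xrightarrow{\;\epsilon_a\;} a\to 0,\]
in which $P_0(a)$ and $P_1(a)$ are finite direct sums of objects of $\G$. To get it, we take an epimorphism $P_0(a)\to a$, which exists by hypothesis. We then take an epimorphism onto its kernel and compose it with the inclusion of the kernel into $P_0(a)$. When $a$ already lies in $\G$ we choose $P_0(a)=a$, $\epsilon_a=\id$ and $P_1(a)=0$, so that the extension will restrict to $\Phi$ on the nose.

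First, $\Phi$ extends uniquely to the additive closure $\G^{\oplus}$ of finite direct sums of objects of $\G$, by matrices: a map $\bigoplus_i g_i\to\bigoplus_j g'_j$ is a matrix of maps in $\G$, and we apply $\Phi$ entrywise. This extension is linear because $\Phi$ is. We then set
\[\overline{\Phi}(a)=\mathrm{Coker}\bigl(\Phi(\partial_a)\bigr).\]
Given $u:a\to b$, projectivity of $P_0(a)$ lets us lift $u\epsilon_a$ through $\epsilon_b$ to some $u_0:P_0(a)\to P_0(b)$. Projectivity of $P_1(a)$ then lets us lift the restriction to $P_1(a)$ through $P_1(b)\to\mathrm{Ker}\,\epsilon_b$, giving $u_1$. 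The induced map on cokernels is $\overline{\Phi}(u)$.

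The key point is independence of the choice of lift. If $u_0$ and $u'_0$ both lift $u$, then $u_0-u'_0$ lands in $\mathrm{Ker}\,\epsilon_b=\mathrm{Im}\,\partial_b$. Projectivity of $P_0(a)$ gives $h:P_0(a)\to P_1(b)$ with $u_0-u'_0=\partial_b h$. Applying $\Phi$, the difference $\Phi(u_0)-\Phi(u'_0)=\Phi(\partial_b)\Phi(h)$ vanishes on the cokernel. Well-definedness, functoriality and linearity follow from the same argument, since sums and composites of lifts are lifts. Right exactness is where I expect the real work, and I would prove it with a horseshoe-type argument. Given an exact sequence $a'\to a\to a''\to 0$, well-definedness shows that $\overline{\Phi}$ can be computed using any presentation by objects of $\G^{\oplus}$, up to canonical isomorphism. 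So we may choose compatible presentations: $P_0(a')\oplus Q\to a$, where $Q\to a$ is a lift of $P_0(a'')\to a''$ (which exists by projectivity), and similarly in degree $1$, arranged so that the presentation of $a''$ is a quotient of that of $a$ via split projections. Because $\Phi$ preserves direct sums, we obtain a diagram of cokernels, and a diagram chase (the snake lemma, or just the universal property of cokernels) gives exactness of $\overline{\Phi}(a')\to\overline{\Phi}(a)\to\overline{\Phi}(a'')\to 0$. An alternative route, which avoids the chase, is to define $\overline{\Phi}(a)$ directly as $\mathrm{Coker}$ and to check that it agrees with the coend-type description $a\mapsto \colim$ over presentations. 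This description is right exact because the functor $\Hom(\overline{\Phi}(a),b)$ is left exact in $a$: it is identified with the natural transformations from $\Hom_\A(-,a)_{|\G}$ to $\Hom_\B(\Phi(-),b)$, and $\Hom_\A(g,-)$ is exact for $g$ projective. I would pursue this second argument, since it gives left exactness of $\Hom(\overline{\Phi}(-),b)$, and hence right exactness of $\overline{\Phi}$, cleanly.

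For (2), we define $\overline{f}_a$ as the map induced on cokernels by the square in which the top row is $\Phi(P_1(a))\to\Phi(P_0(a))\to\overline{\Phi}(a)\to 0$, which is exact by definition. The bottom row is $\Psi(P_1(a))\to\Psi(P_0(a))\to\Psi(a)$, whose composite is zero because $\epsilon_a\partial_a=0$. The first two vertical maps are $f$, extended additively to $\G^{\oplus}$; this extension is natural because $\Psi$ is additive. The induced map exists because $\Psi(\epsilon_a)f\Phi(\partial_a)=\Psi(\epsilon_a\partial_a)f=0$. Naturality of $\overline{f}$ in $a$ follows from naturality of $f$ applied to the lifts $u_0$. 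Uniqueness holds because $\Phi(P_0(a))\to\overline{\Phi}(a)$ is an epimorphism, and any natural extension $\overline{f}$ must satisfy $\overline{f}_a\circ\overline{\Phi}(\epsilon_a)=\Psi(\epsilon_a)\circ f_{P_0(a)}$ by naturality applied to $\epsilon_a$.
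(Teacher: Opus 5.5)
Your argument is correct, and it is the standard construction (cokernels of $\Phi$ applied to chosen $\G$-presentations, with part (2) from the universal property of cokernels) that the paper does not write out but delegates to the theory of derived functors and \cite[Thm 8.7.2]{KS}; your Yoneda-type proof of right exactness via $\Hom_\B(\overline{\Phi}(a),b)\cong\mathrm{Nat}\bigl(\Hom_\A(-,a)_{|\G},\Hom_\B(\Phi(-),b)\bigr)$ is complete as stated. If you wrote out the horseshoe route instead, note that when $a'\to a$ is not a monomorphism the degree-one term on the $a'$ side must cover $\epsilon_{a'}^{-1}\bigl(\mathrm{Ker}(a'\to a)\bigr)$ and not only $\mathrm{Im}\,\partial_{a'}$, so you cannot simply reuse $P_1(a')$ there.
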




A \emph{tensor abelian category} is tensor category which is abelian, and whose tensor product is exact with respect to each argument. In the context of tensor abelian categories, we have the following improvement of lemma \ref{lm:extensions}.

\begin{lm}\label{lm:monextensions}
In the context of lemma \ref{lm:extensions}, assume furthermore that $\A$ and $\B$ are tensor abelian categories, and that $\G$ is a tensor subcategory of $\A$.
\begin{enumerate}
\item If $\Phi$ is a tensor functor with tensor structure $\nabla$, then there is a unique tensor structure $\overline{\nabla}$ on $\overline{\Phi}$ extending $\nabla$.
\item If $\Psi:\A\to \B$ is a tensor functor and if $f:\Phi\to \Psi_{|\G}$ is a morphism of tensor functors, then $\overline{f}:\overline{\Phi}\to \Psi$ is also a morphism of tensor functors.
\end{enumerate}
\end{lm}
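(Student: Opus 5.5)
We first reduce the tensor structure to a statement about natural transformations between functors of two variables. Consider the bifunctors $\A\times\A\to\B$
\[(x,y)\mapsto \overline{\Phi}x\otimes\overline{\Phi}y\quad\text{and}\quad (x,y)\mapsto\overline{\Phi}(x\otimes y).\]
Both are right exact in each variable. The first is so because $\overline{\Phi}$ is right exact and $\otimes$ is exact in each argument in $\B$. The second is so because $\otimes$ is exact in $\A$ and $\overline{\Phi}$ is right exact.

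The key step is a two-variable version of lemma \ref{lm:extensions}(2). Let $H,K:\A\times\A\to\B$ be bilinear functors with $H$ right exact in each variable. Then every natural transformation $H_{|\G\times\G}\to K_{|\G\times\G}$ extends uniquely to $H\to K$. To prove this, fix $y\in\G$ and apply lemma \ref{lm:extensions}(2) in the variable $x$. This needs $H(-,y)$ to coincide with the right exact extension of its restriction to $\G$, which follows from uniqueness of right exact extensions: for a presentation $P_1\to P_0\to x\to 0$ with $P_i$ finite sums of objects of $\G$, the object $H(x,y)$ is the cokernel of $H(P_1,y)\to H(P_0,y)$. This gives a transformation defined for $x\in\A$, $y\in\G$, natural in both variables, where naturality in $y$ comes from uniqueness. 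Repeating the argument in the variable $y$ produces the extension, and uniqueness again comes from the presentations.

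Applying this with $H=\overline{\Phi}\otimes\overline{\Phi}$ and $K=\overline{\Phi}(-\otimes-)$ extends $\nabla$ uniquely to $\overline{\nabla}$. Since $\G$ is a tensor subcategory, $P\otimes Q\in\G$ for $P,Q\in\G$, so $\overline{\Phi}=\Phi$ on these objects and $\nabla$ is indeed defined there. Applying the same statement with $H$ and $K$ exchanged extends $\nabla^{-1}$. The two composites extend identities, so by uniqueness $\overline{\nabla}$ is an isomorphism.

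The coherence diagrams are handled by a three-variable version of the same statement. The associativity square compares two natural transformations between the trifunctors $\overline{\Phi}x\otimes\overline{\Phi}y\otimes\overline{\Phi}z$ and $\overline{\Phi}(x\otimes y\otimes z)$, and the source is right exact in each variable. The symmetry square compares two transformations $\overline{\Phi}x\otimes\overline{\Phi}y\to\overline{\Phi}(y\otimes x)$. In each case the two transformations agree on $\G$ because $\nabla$ satisfies the axioms there, so uniqueness forces them to agree everywhere. The unit condition holds because $\mathbf{1}\in\G$, as $\G$ is a tensor subcategory, so $\overline{\Phi}\mathbf{1}=\Phi\mathbf{1}$. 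Uniqueness of $\overline{\nabla}$ is the uniqueness part of the bifunctor extension.

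For part (2), the morphism $\overline{f}$ given by lemma \ref{lm:extensions}(2) is an isomorphism on $\mathbf{1}\in\G$. The compatibility square compares the two transformations
\[\nabla\circ(\overline{f}\otimes\overline{f})\quad\text{and}\quad\overline{f}\circ\overline{\nabla}\]
from $\overline{\Phi}x\otimes\overline{\Phi}y$ to $\Psi(x\otimes y)$. They agree on $\G\times\G$ because $f$ is a morphism of tensor functors, and the source bifunctor is right exact in each variable, so uniqueness gives equality.

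The main obstacle is the multivariable uniqueness and extension statement. In particular, one must be careful that the source functor is right exact in each variable separately, which is where exactness of $\otimes$ in $\B$ is used, and that extending one variable at a time yields transformations natural in all variables jointly. I would settle this first by writing $H(x,y)$ explicitly as an iterated cokernel over presentations by objects of $\G$.
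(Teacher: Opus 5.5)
Your proposal is correct and follows the same route as the paper: you extend $\nabla$ one variable at a time via lemma \ref{lm:extensions}, using that $\overline{\Phi}x\otimes\overline{\Phi}y$ is right exact in each variable, and you deduce the coherence axioms and part (2) from uniqueness of extensions of natural transformations out of right exact functors. You are more explicit than the paper on two points it leaves implicit: joint naturality in both variables, and invertibility of $\overline{\nabla}$, which you obtain by extending $\nabla^{-1}$ using that $\overline{\Phi}(x\otimes y)$ is also right exact in each variable.
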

\begin{proof}
Notice that $\overline{\Phi} x\otimes \overline{\Phi} y$ is right exact with respect to each variable.

Therefore, $\nabla:\overline{\Phi} x\otimes\overline{\Phi} y\to \overline{\Phi}(x\otimes y)$, initially given only when $x$ and $y$ are objects of $\G$, can be extended by lemma \ref{lm:extensions}, first to all objects $x$ of $\A$ and $y$ of $\G$ and then to all objects $x$, $y$ of $\A$. The resulting two steps extension is denoted by $\overline{\nabla}$. Uniqueness of extensions of natural transformations whose domain is a right exact functor guaranties that $\overline{\nabla}$ inherits the properties of $\nabla$, i.e. is a tensor structure on $\overline{\Phi}$. (To be more specific, all diagrams involving $\nabla$ for $x$ and $y$ in $\G$ extend to diagrams involving $\overline{\nabla}$ for $x$ and $y$ in $\A$).

Similarly, if $\overline{f}:\overline{\Phi}\to \Psi$ extends a morphism of tensor functors, then by uniqueness of extensions of natural transformations from a right exact functor the natural transformations $\overline{\nabla}\circ (\overline{f}^{\otimes 2})$ and $\overline{f}\circ \overline{\nabla}$ are two extensions of the same natural transformation  $\nabla\circ ({f}^{\otimes 2})={f}\circ {\nabla}$, hence they are equal, which proves that $\overline{f}$ is a morphism of tensor functors.
\end{proof}

\section{Strict polynomial functors and the uniqueness lemma}\label{sec:proplm}

\subsection{Recollections of strict polynomial functors} 
Let $\F$ be the category whose objects are the endofunctors of finite dimensional vector spaces, and whose morphisms are the natural transformations. Then $\F$ is a linear abelian category whose direct sums, kernels and cokernels are computed objectwise. For example the kernel of $f:F\to G$ is the functor such that
$(\mathrm{Ker}\,f)(V)$ is the kernel of $f_V:F(V)\to G(V)$. The category $\F$ is endowed with a tensor product which is also defined objectwise $(F\otimes G)(V)=F(V)\otimes G(V)$ (the unit of the tensor product is the constant functor $\kk$), and which makes it a tensor abelian category.
The category $\PP$ of strict polynomial functors is an algebro-geometric analogue of $\F$. We refer the reader to \cite{FS} for definitions and proofs, we only recall here the properties that we will need. 

\subsubsection{}\label{subsubsec:1} The category $\PP$ is a tensor abelian category, and there is a faithful exact tensor functor $\U:\PP\to \F$. Moreover, a sequence $0\to F'\to F\to F''\to 0$ in $\PP$ is exact if and only if its image by $\U$ is exact in $\F$. As a consequence, for all finite dimensional vector spaces $V$ there is an exact tensor functor of evaluation on $V$: \[\begin{array}{cccc}
\ev_V:&\PP&\to &\Vect\\
& F &\mapsto & (\U F)(V)
\end{array}\]
and a sequence $0\to F'\to F\to F''\to 0$ in $\PP$ is exact if and only if its image by $\ev_V$ is an exact sequence of vector spaces for all $V$. In particular, $\PP$ is a tensor abelian category modeled on vector spaces in the sense of definition \ref{defiintro:modeled}.
\subsubsection{}\label{subsubsec:2} There is a unique injective and projective strict polynomial functor $I$ such that $\ev_VI=V$ for all $V$. The tensor powers $I^{\otimes d}$ are both injective and projective. We have $\Hom_\PP(I^{\otimes d},I^{\otimes e})=0$ if $d\ne e$, and for all $d$ the canonical morphism $\Si_d\to \End_\PP(I^{\otimes d})$ induces an isomorphism of algebras, see e.g. \cite[Lemma 2.1]{Touens}:
\[\kk\Si_d\xrightarrow[]{\simeq}\End_\PP(I^{\otimes d})\;.\]
\begin{rk}\label{rk:schurweyl}
Let $S(n,d)$ denote the classical Schur algebra as in \cite{Mar}. It follows from \cite[Thm 3.2]{FS} that the algebra $\End_\PP(I^{\otimes d})$ is isomorphic to $\End_{S(n,d)}((\kk^n)^{\otimes d})$ for $n\ge d$, hence the isomorphism above is nothing but the usual calculation underlying Schur-Weyl duality between Schur algebras and symmetric groups.   
\end{rk}
\subsubsection{}\label{subsubsec:3} The $d$-th divided power  $\Gamma^d$ is defined by $\Gamma^0=\kk$, $\Gamma^1=I$, and for all $d>1$, $\Gamma^d$ is the kernel of the morphism 
\[I^{\otimes d}\xrightarrow[]{\prod \id-\mathrm{can}(\sigma)}\bigoplus_{\sigma\in \Si_d} I^{\otimes d}\;.\]
For all tuples $\mu=(\mu_1,\dots,\mu_r)$ of nonnegative integers, we denote $\Gamma^\mu$ the tensor product $\Gamma^{\mu_1}\otimes\cdots\otimes\Gamma^{\mu_r}$. The $\Gamma^\mu$ are projective, and every object of $\PP$ is a quotient of a finite direct sum of such projectives.
 If the weight $|\mu|:=\sum \mu_i$ of $\mu$ equals $d$, there is a corresponding Young subgroup $\Si_\mu\subset \Si_d$, and an exact sequence:
\begin{equation}\label{eqn:copres}
0\to \Gamma^\mu\xrightarrow[]{i_\mu}I^{\otimes d}\xrightarrow[]{\prod \id-\mathrm{can}(\sigma)}\bigoplus_{\sigma\in \Si_\mu} I^{\otimes d}\;.
\end{equation}

\subsubsection{}\label{subsubsec:4} In characteristic zero, every object of $\PP$ is a finite direct sum of simple objects, and every simple object is a direct summand of a tensor power $I^{\otimes d}$.

\subsection{The subcategory $\T$ and the proof of proposition \ref{printro:univpropcharzero}}
Let $\T$ be the full tensor subcategory of $\PP$ generated by $I$. Thus $\T$ is the full subcategory of $\PP$ whose objects are the tensor powers of $I$.
\begin{lm}\label{lm:extensionT}
Let $\A$ be a tensor category. For all objects $A$ of $\A$ there is a tensor functor $\Phi:\T\to \A$ such that $\Phi(I)=A$. Moreover, if $\Phi,\Psi:\T\to \A$ are two tensor functors, then every morphism $f:\Phi(I)\to \Psi(I)$ extends to a unique morphism of tensor functors $\overline{f}:\Phi\to \Psi$.
\end{lm}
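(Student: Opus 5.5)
We prove lemma \ref{lm:extensionT} by presenting $\T$ explicitly. By \ref{subsubsec:2}, $\T$ has objects $I^{\otimes d}$ for $d\ge 0$, with $\Hom_\PP(I^{\otimes d},I^{\otimes e})=0$ for $d\ne e$ and $\End_\PP(I^{\otimes d})\simeq\kk\Si_d$ via $\mathrm{can}$. Since $\PP$ is taken strict, the tensor product of $\T$ is determined by
\[I^{\otimes d}\otimes I^{\otimes e}=I^{\otimes d+e},\qquad \mathrm{can}(\sigma)\otimes\mathrm{can}(\sigma')=\mathrm{can}(\sigma\times\sigma'),\]
where $\sigma\times\sigma'\in\Si_{d+e}$ is the juxtaposition. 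The symmetry $I^{\otimes d}\otimes I^{\otimes e}\to I^{\otimes e}\otimes I^{\otimes d}$ is $\mathrm{can}$ of the block transposition $\beta_{d,e}$. Both facts follow from the hexagon axiom, by induction on $d,e$ using $\mathrm{can}(i,i+1)=\id\otimes\tau\otimes\id$. So $\T$ is the linearization of the free symmetric strict monoidal category on one object; the same argument in $\A$ shows that $A^{\otimes d}$ carries $\mathrm{can}:\Si_d\to\End_\A(A^{\otimes d})$, compatible with juxtaposition and block transpositions.

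For existence, set $\Phi(I^{\otimes d})=A^{\otimes d}$. On morphisms, define $\Phi$ on $\Hom(I^{\otimes d},I^{\otimes d})\simeq\kk\Si_d$ as the linear extension of $\mathrm{can}_A$, and set it to zero between different degrees. Since $\mathrm{can}_A$ is a monoid morphism, $\Phi$ is a linear functor. Take $\nabla$ to be the identity $A^{\otimes d}\otimes A^{\otimes e}=A^{\otimes d+e}$. Naturality of $\nabla$ reduces to $\mathrm{can}_A(\sigma)\otimes\mathrm{can}_A(\sigma')=\mathrm{can}_A(\sigma\times\sigma')$. The associativity diagram is trivial because everything is strict. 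The symmetry diagram reduces to $\tau_{A^{\otimes d},A^{\otimes e}}=\mathrm{can}_A(\beta_{d,e})$. Both identities were recorded above, so $\Phi$ is a tensor functor with $\Phi(I)=A$.

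For uniqueness, let $\Phi,\Psi$ be tensor functors and $f:\Phi(I)\to\Psi(I)$ a morphism. The condition on $\overline f$ forces
\[\overline f_{I^{\otimes d}}=\nabla^\Psi_d\circ f^{\otimes d}\circ(\nabla^\Phi_d)^{-1},\]
by induction using the morphism-of-tensor-functors square, and at $d=0$ it forces $\overline f_{\mathbf 1}=\nabla^\Psi_0\circ(\nabla^\Phi_0)^{-1}$, which is an isomorphism. This proves uniqueness. For existence we take this formula as the definition. Naturality is only needed with respect to $\mathrm{can}(\sigma)$, since these span all the nonzero Hom spaces. Because $\nabla_d$ preserves the $\Si_d$-actions (end of \S\ref{subsec:recolltensab}), it suffices that $f^{\otimes d}$ commutes with $\mathrm{can}_\A(\sigma)$; this is naturality of $\tau$ in $\A$ applied to adjacent transpositions. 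Compatibility of $\overline f$ with $\nabla$ follows from the recursive definition $\nabla_{d+e}=\nabla\circ(\nabla_d\otimes\nabla_e)$ (a consequence of associativity of $\nabla$) together with naturality of $\nabla$.

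The main obstacle is bookkeeping: justifying that $\Phi(\mathrm{can}(\sigma))=\mathrm{can}(\sigma)$ (modulo $\nabla_d$) is well defined and that the symmetry of $\T$ is given by block transpositions. This uses coherence for symmetric monoidal categories, which I would invoke rather than reprove, together with the equality $\tau^2=\id$, which makes $\mathrm{can}$ factor through $\Si_d$ rather than the braid group.
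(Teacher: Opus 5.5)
Your proposal is correct and takes essentially the same route as the paper. It sets $\Phi(I^{\otimes d})=A^{\otimes d}$, $\Phi(\mathrm{can}(\sigma))=\mathrm{can}(\sigma)$ and $\nabla=\id$, then both forces and defines $\overline{f}$ on $I^{\otimes d}$ as $\nabla_d\circ f^{\otimes d}\circ\nabla_d^{-1}$, using $\Si_d$-equivariance for naturality and $\nabla_{d+e}=\nabla\circ(\nabla_d\otimes\nabla_e)$ for monoidality. The extra checks you spell out — naturality of $\nabla=\id$ via juxtaposition, compatibility with the symmetry via block transpositions, and the unit component $\overline{f}_{\mathbf{1}}$ — are exactly the details the paper leaves implicit.
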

\begin{proof}
We define a functor $\Phi:\T\to \A$ on objects by $\Phi(I^{\otimes d})=A^{\otimes d}$. In view of the description of morphisms in $\T$ given \ref{subsubsec:2}, in order to define $\Phi$ on morphisms it suffices to specify $\Phi(\mathrm{can}(\sigma))$ for all $\sigma\in\Si_d$. We let $\Phi(\mathrm{can}(\sigma))=\mathrm{can}(\sigma)$ where the canonical map on the right hand side is the one of $\A$. Since $\mathrm{can}$ is a morphism of monoids, $\Phi$ is indeed a linear functor. It becomes a tensor functor with the tensor monoidal structure $\nabla:=\id$. 

Next we prove the extension property of morphisms. Let $\overline{f}:\Phi\to \Psi$ be a morphism of tensor functors whose value on $I$ equals $f:\Phi(I)\to \Psi(I)$. Naturality of $\nabla_d$ with respect to $\Phi$ implies that $\overline{f}:\Phi(I^{\otimes d})\to \Psi(I^{\otimes d})$ equals the composition
\begin{equation}\label{eqn:Text1}
\Phi(I^{\otimes d})\xrightarrow[\simeq]{\nabla_d^{-1}}\Phi(I)^{\otimes d}\xrightarrow[]{f^{\otimes d}}\Psi(I)^{\otimes d}
\xrightarrow[\simeq]{\nabla_d}\Psi(I^{\otimes d})\;.
\end{equation}
Thus $\overline{f}$ is uniquely determined by $f^{\otimes d}$, which proves uniqueness of $\overline{f}$. Conversely, assume that $f:\Phi(I)\to \Psi(I)$ is given. For all $d\ge 0$ we take the composition \eqref{eqn:Text1} as the definition of $\overline{f}:\Phi(I^{\otimes d})\to \Psi(I^{\otimes d})$. Since all the maps of this composition are $\Si_d$-equivariant, our definition of $\overline{f}$ actually yields a natural transformation. 
It remains to show that $\overline{f}$ is a morphism of tensor functors, which can be deduced from \eqref{eqn:Text1} applied to a tensor product $(I^{\otimes d}\otimes I^{\otimes e})$ by noting that $\nabla_{d+e}=\nabla\circ (\nabla_d\otimes\nabla_e)$.
\end{proof}

Now we can prove the universal property of $\PP$ in characteristic zero. 
\begin{proof}[Proof of proposition \ref{printro:univpropcharzero}]
Let $\A$ be a tensor abelian category and let $A$ be an object of $\A$. By lemma \ref{lm:extensionT}, there is a unique tensor functor $\Phi_\T:\T\to \A$ such that $\Phi_\T(I)=A$. In characteristic zero, every object of $\PP$ is a quotient of a finite direct sum of objects of $\T$ by \ref{subsubsec:4}. Hence lemma \ref{lm:monextensions} says that there is a unique right exact tensor functor $\Phi:\PP\to \A$ extending $\Phi_\T$. Finally, since $\PP$ is semi-simple in characteristic zero, every linear functor $\PP\to\A$ is exact, hence $\Phi$ is exact, which finishes the proof.
\end{proof}

\subsection{The subcategory $\G$ and the proof of lemma \ref{lmintro:unique}}\label{subsec:prooflemmauniqueness}
Let $\G$ be the full tensor subcategory of $\PP$ generated by the divided powers $\Gamma^d$ for $d\ge 0$. Thus the objects of $\G$ are the $\Gamma^\mu$ for all tuples $\mu$. There are inclusions of tensor categories:
\[\T\subset \G\subset \PP\;.\]
\begin{proof}[Proof of lemma \ref{lmintro:unique}]
Let $\A$ be a tensor abelian category, let $\Phi,\Psi:\PP\to \A$ be exact tensor functors, and let $f:\Phi(I)\to \Psi(I)$ be a morphism in $\A$. We denote by $\Phi_{|\C}$ and $\Psi_{|\C}$ the restrictions of $\Phi$ and $\Psi$ to a subcategory $\C\subset \PP$. 
By lemma \ref{lm:extensionT}, $f$ extends uniquely to a morphism of tensor functors $f_\T:\Phi_{|\T}\to \Psi_{|\T}$. 

Now we claim that $f_\T$ extends uniquely to a morphism of tensor functors $f_\G:\Phi_{|\G}\to \Psi_{|\G}$. For this purpose we use in an essential way that $\Phi$ and $\Psi$ are left exact. Indeed, assume first that such an extension $f_\G$ is given. For all tuples $\mu$ of weight $d$ we a commutative square whose horizontal rows are exact:
\begin{equation}\label{eqn:extU}
\begin{tikzcd}[column sep=large]
0\to\Phi(\Gamma^\mu)\ar{d}{f_\G}\ar{r}{\Phi(i_\mu)}&  \Phi(I^{\otimes d})\ar{d}{f_\T}
\ar{rr}{\prod \id-\Phi(\mathrm{can}(\sigma))}&&\bigoplus_{\sigma\in\Si_\mu}\Phi(I^{\otimes d})\ar{d}[swap]{\bigoplus f_\T}\\
0\to\Psi(\Gamma^\mu)\ar{r}{\Psi(i_\mu)}&  \Psi(I^{\otimes d})\ar{rr}{\prod \id-\Psi(\mathrm{can}(\sigma))}&&\bigoplus_{\sigma\in\Si_\mu}\Psi(I^{\otimes d})
\end{tikzcd}.
\end{equation}
This shows that $f_\G$ is uniquely determined by $f_\T$. Next, let us prove that every morphism of tensor functors $f_\T$ has an extension to $\G$. For all $\mu$ we define a morphism $f_\G:\Phi(\Gamma^\mu)\to \Psi(\Gamma^\mu)$ as the unique morphism such that diagram \eqref{eqn:extU} commutes. In order to check that these morphisms actually yield a morphism of tensor functors, we consider two objects $\Gamma^\mu$ and $\Gamma^\nu$ associated to tuples of weight $|\mu|=d$ and $|\nu|=e$, and a morphism $u:\Gamma^\mu\to \Gamma^\nu$. Since the tensor powers of $I$ are injective, $u$ extends to a morphism $\overline{u}:I^{\otimes d}\to I^{\otimes e}$ such that $i_\nu\circ u=\overline{u}\circ i_\mu$. Since $f_\T$ is a morphism of tensor functors we have commutative squares:
\begin{equation}\label{eqn:squaresU}
\begin{tikzcd}
\Phi(I^{\otimes d})\ar{r}{\Phi(\overline{u})}\ar{d}[swap]{f_\T} &\Phi(I^{\otimes e}) \ar{d}{f_\T}\\
\Psi(I^{\otimes d})\ar{r}{\Psi(\overline{u})}&\Psi(I^{\otimes e})
\end{tikzcd}
\qquad
\begin{tikzcd}[column sep=large]
\Phi(I^{\otimes d})\otimes\Phi(I^{\otimes e})\ar{r}{f_\T\otimes f_\T}\ar{d}[swap]{\nabla} &\Psi(I^{\otimes d})\otimes\Psi(I^{\otimes e}) \ar{d}{\nabla}\\
\Phi(I^{\otimes d}\otimes I^{\otimes e})\ar{r}{f_\T}&\Psi(I^{\otimes d}\otimes I^{\otimes e})
\end{tikzcd}.    
\end{equation}
The squares \eqref{eqn:squaresU} contain the squares \eqref{eqn:squaresUbis} below as subdiagrams. To be more specific, thanks to $i_\mu$ and $i_\nu$, the vertices of the squares \eqref{eqn:squaresUbis} are subobjects of the vertices of the squares \eqref{eqn:squaresU}, and the arrows the squares \eqref{eqn:squaresUbis} are the restrictions of the arrows of the squares  \eqref{eqn:squaresU}.
\begin{equation}\label{eqn:squaresUbis}
\begin{tikzcd}
\Phi(\Gamma^\mu)\ar{r}{\Phi(u)}\ar{d}[swap]{f_\G} &\Phi(\Gamma^\nu) \ar{d}{f_\G}\\
\Psi(\Gamma^\mu)\ar{r}{\Psi(u)}&\Psi(\Gamma^\nu)
\end{tikzcd}
\qquad
\begin{tikzcd}[column sep=large]
\Phi(\Gamma^\mu)\otimes\Phi(\Gamma^\nu)\ar{r}{f_\G\otimes f_\G}\ar{d}[swap]{\nabla} &\Psi(\Gamma^\mu)\otimes\Psi(\Gamma^\nu) \ar{d}{\nabla}\\
\Phi(\Gamma^\mu\otimes \Gamma^\nu)\ar{r}{f_\G}&\Psi(\Gamma^\mu\otimes \Gamma^\nu)
\end{tikzcd}.    
\end{equation}
The commutativity of the squares \eqref{eqn:squaresU} imply the commutativity of their subsquares \eqref{eqn:squaresUbis}. This shows that $f_\G$ is indeed a morphism of tensor functors.

Finally, since every object of $\PP$ is the quotient of a finite direct sum of objects of $\G$ by \ref{subsubsec:3},  lemma \ref{lm:monextensions} implies that $f_\G$ extends uniquely to a morphism of tensor functors $\overline{f}:\Phi\to \Psi$, which finishes the proof of lemma \ref{lmintro:unique}.
\end{proof}

\begin{rk}\label{rk:rightexactnonunique}
Lemma \ref{lmintro:unique} implies that two exact tensor functors $\Phi,\Psi:\PP\to \A$ are isomorphic if and only if $\Phi(I)$ and $\Psi(I)$ are isomorphic. This fails for half exact tensor functors in positive characteristic, even for $\A=\Vect$. For example, the functor $\Psi(F)=\bigoplus_{d\ge 0}\Hom_\PP(S^{d},F)$ can be equipped \cite[Lemma 5.5]{Touaif} with a tensor structure. We have $\Psi(I)\simeq\kk =\ev_\kk(I)$, although $\Psi$ is not isomorphic to $\ev_\kk$: the former is only left exact whereas the latter is exact. Similarly, $\Phi(F)=\bigoplus_{d\ge 0}\Hom_\PP(F,\Gamma^d)^*$ is a right exact tensor functor with $\Phi(I)=\kk$ though not isomorphic to $\ev_\kk$.
\end{rk}

\section{Admissible categories: proof of theorems 5 and 9}\label{sec:proofmain}

Theorems 5 and 9 are direct consequences of the following theorem. Indeed, statements \eqref{item1bis} and \eqref{item3bis} of theorem \ref{thm:maincorps} imply that tensor abelian categories modeled on vector spaces are admissible.  
\begin{thm}\label{thm:maincorps}
Let $\A$ be a tensor abelian category.
\begin{enumerate}[(1)]
\item\label{item1bis} The tensor abelian category $\Vect$ of all vector spaces (with the usual tensor product and the usual braiding) is admissible.
\item\label{item2bis} If $\A$ is admissible, then every quotient $\A/\I$ by a tensor ideal $\I$ is admissible.
\item\label{item3bis} Assume that there is a family of tensor functors $(e_x:\A\to \B_x)_{x\in X}$ where the $\B_x$ are admissible tensor abelian categories, such that a diagram $0\to A'\to A\to A''\to 0$ in $\A$ is a short exact sequence if and only if 
\[0\to e_xA'\to e_xA\to e_xA''\to 0\]
is a short exact sequence in $\B_x$ for all $x\in X$. Then $\A$ is admissible.
\end{enumerate}
\end{thm}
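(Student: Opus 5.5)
The plan is to treat the three statements separately. Statement \eqref{item3bis} carries the real content; \eqref{item1bis} and \eqref{item2bis} are essentially formal.

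For \eqref{item1bis}, let $V$ be an arbitrary (possibly infinite dimensional) vector space. If $V$ is finite dimensional, the evaluation $\ev_V$ of \ref{subsubsec:1} is an exact tensor functor with $\ev_V I=V$. In general I would set
\[\Phi(F)=\colim_{W\subset V,\ \dim W<\infty}\ev_W F,\]
the colimit taken over the filtered poset of finite dimensional subspaces of $V$. This uses that strict polynomial functors are functorial with respect to all linear maps between finite dimensional spaces, in particular inclusions $W\subset W'$. Filtered colimits in $\Vect$ are exact and commute with $\otimes$, so $\Phi$ is an exact tensor functor, and $\Phi(I)=\colim W=V$.

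For \eqref{item2bis}, the quotient functor $\pi:\A\to\A/\I$ is an exact tensor functor and is the identity on objects. Given an object $A$ of $\A/\I$, I would view $A$ as an object of $\A$, take the exact tensor functor $\Phi_A:\PP\to\A$ with $\Phi_A(I)=A$ provided by admissibility of $\A$, and use $\pi\circ\Phi_A$.

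For \eqref{item3bis}, first note that the hypothesis forces each $e_x$ to be exact, and the family $(e_x)$ jointly detects exactness; in particular it jointly detects monomorphisms, and it detects when two subobjects coincide. Fix $A$ in $\A$. For each $x$, choose an exact tensor functor $\Psi_x:\PP\to\B_x$ with $\Psi_x(I)=e_xA$. Then build $\Phi$ in three stages.
\begin{enumerate}
\item Lemma \ref{lm:extensionT} gives $\Phi_\T:\T\to\A$ with $\Phi_\T(I)=A$. Applied again, it gives tensor isomorphisms $e_x\circ\Phi_\T\simeq\Psi_{x|\T}$ extending $\id_{e_xA}$.
\item Define $\Phi_\G(\Gamma^\mu)$ as the kernel in $\A$ of $\Phi_\T$ applied to the copresentation \eqref{eqn:copres}. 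Since $e_x$ and $\Psi_x$ are left exact, $e_x\Phi_\G(\Gamma^\mu)$ identifies with $\Psi_x(\Gamma^\mu)$, as a subobject of $e_x\Phi_\T(I^{\otimes d})\simeq\Psi_x(I^{\otimes d})$.
\item For a morphism $u:\Gamma^\mu\to\Gamma^\nu$, choose an extension $\overline u:I^{\otimes d}\to I^{\otimes e}$ as in the proof of Lemma \ref{lmintro:unique}. Define $\Phi_\G(u)$ as the restriction of $\Phi_\T(\overline u)$. Similarly, define the tensor structure as the restriction of $\nabla=\id$ on tensor powers.
\end{enumerate}

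The main obstacle is step 3: showing that these restrictions exist, do not depend on the choice of $\overline u$, and are functorial and monoidal. This requires checking that certain morphisms factor through subobjects, or that two subobjects coincide. In particular, $\Phi_\G(\Gamma^\mu)\otimes\Phi_\G(\Gamma^\nu)$ must map isomorphically onto $\Phi_\G(\Gamma^\mu\otimes\Gamma^\nu)$ inside $A^{\otimes d+e}$. I would verify every such statement after applying all the $e_x$. There it becomes the corresponding statement for the exact tensor functors $\Psi_x$, and it holds because $\Psi_x$ is a functor on $\PP$ and $\Psi_x(\overline u)$ restricts to $\Psi_x(u)$. One then transports back to $\A$ using that the $e_x$ jointly detect these properties. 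For example, a morphism $h$ factors through a subobject $S\subset T$ precisely when the composite to $T/S$ is zero. Its vanishing can be tested through the $e_x$ by viewing $0\to \mathrm{im}\to 0$-type sequences: a morphism $g$ is zero iff $0\to 0\to \mathrm{im}\,g\to 0\to 0$ is exact.

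Finally, Lemma \ref{lm:monextensions} extends $\Phi_\G$ to a right exact tensor functor $\Phi:\PP\to\A$. Then $e_x\circ\Phi$ is a right exact tensor functor whose restriction to $\G$ is isomorphic, as a tensor functor, to $\Psi_{x|\G}$. By the uniqueness part of Lemma \ref{lm:extensions} applied in both directions, $e_x\circ\Phi\simeq\Psi_x$, which is exact. Hence $e_x\circ\Phi$ sends short exact sequences of $\PP$ to short exact sequences for every $x$. Since the family $(e_x)$ detects exactness, $\Phi$ is exact, and $\Phi(I)=A$.
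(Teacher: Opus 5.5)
Your proposal is correct. Its architecture is the paper's. Parts (1) and (2) are argued exactly as there. In (3) you also pass through $\T\subset\G\subset\PP$, take the right exact extension of Lemma \ref{lm:monextensions}, identify $e_x\circ\Phi$ with the exact functor $\Psi_x$ by uniqueness of extensions from $\G$, and conclude because the $e_x$ reflect exactness.

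The real difference is how you show that $\Phi_\G$ is a well-defined tensor functor.

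\textbf{The paper's route.} Lemma \ref{lm:extTG} does this for an \emph{arbitrary} tensor abelian category. A morphism $u$ is lifted to a morphism of copresentations $u^\bullet=(u^0,u^1)$. The existence of $u^1$ forces $\Phi_\T(u^0)$ to restrict to the kernels, and uniqueness of the lift up to homotopy gives independence of the choice. The tensor structure $\nabla_\G$ is obtained by comparing kernels, using exactness of $\otimes$ in $\A$. The comparison isomorphism $e_x\circ\Phi_\G\simeq\Psi_{x|\G}$ is then constructed as a separate step.

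\textbf{Your route.} You keep only $u^0=\overline u$. You verify each required statement after applying the exact, jointly faithful family $(e_x)$, where it becomes a true statement about the genuine functors $\Psi_x$ on $\PP$. These statements are: the factorization through subobjects, independence of the choice of $\overline u$, and equality of the subobjects $\Phi_\G(\Gamma^\mu)\otimes\Phi_\G(\Gamma^\nu)$ and $\Phi_\G(\Gamma^\mu\otimes\Gamma^\nu)$ of $A^{\otimes(d+e)}$. Your derivation of joint faithfulness from the hypothesis, via $\mathrm{im}\,g$, is correct.

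\textbf{What each buys.} Your approach gives the identification with $\Psi_{x|\G}$ at no extra cost. It also avoids the homotopy bookkeeping and the identification of $\Phi_\G(\Gamma^\mu\otimes\Gamma^\nu)$ with the kernel computed from the tensor product of the two copresentations. The price is that your construction of $\Phi_\G$ already uses the hypothesis of (3). The paper's construction shows that the extension from $\T$ to $\G$ exists unconditionally. It thereby isolates the test functors as needed only to upgrade the right exact $\Phi$ to an exact one.
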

The remainder of the section is devoted to the proof of theorem \ref{thm:maincorps}, which will be done through a series of lemmas. 
The following lemma proves that $\Vect$ is admissible, that is, assertion \eqref{item1bis} of theorem \ref{thm:maincorps}. 
\begin{lm}\label{lm:classifVect}
For all vector spaces $V$ there exists an exact tensor functor $\ev_V:\PP\to \Vect$ such that $\ev_V(I)=V$.    
\end{lm}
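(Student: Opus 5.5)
For finite dimensional $V$ the statement is already contained in \ref{subsubsec:1}: the functor $\ev_V(F)=(\U F)(V)$ is an exact tensor functor with $\ev_V(I)=V$ by \ref{subsubsec:2}. The plan is therefore to handle an arbitrary, possibly infinite dimensional, vector space $V$ by a filtered colimit over its finite dimensional subspaces.

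Write $V=\colim_{W}W$, where $W$ runs over the directed poset of finite dimensional subspaces of $V$. A strict polynomial functor $F$ is in particular a functor on finite dimensional vector spaces, through $\U F$. The inclusions $W\subset W'$ therefore give linear maps $(\U F)(W)\to(\U F)(W')$, and we set
\[\ev_V(F):=\colim_{W\subset V}(\U F)(W)\;,\]
which is functorial in $F$. Exactness follows because filtered colimits in $\Vect$ are exact and each $\ev_W$ is exact. Since the poset is directed, $W\mapsto W\times W$ is cofinal in the product poset, and filtered colimits commute with $\otimes$ in $\Vect$. Hence
\[\colim_W \U F(W)\otimes\colim_W \U G(W)\simeq\colim_W (\U F(W)\otimes \U G(W)),\]
and this gives the tensor structure $\nabla$. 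The axioms for $\nabla$ (associativity, compatibility with the symmetry) hold termwise, since each $\ev_W$ is a tensor functor and the structure maps of $\U$ are natural in $W$. The unit is preserved: $\ev_V(\kk)=\colim\kk=\kk$, as the poset is connected. Finally $\ev_V(I)=\colim_W W=V$.

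An alternative, if one prefers to avoid colimits, would be to use that $\U F$ extends canonically to all vector spaces, since a strict polynomial functor of degree $d$ is a module over a Schur algebra, $F(V)=\Gamma^{d}\Hom(-,V)\otimes_{\Gamma^d}\dots$. The colimit route is the cleanest, though.

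The only delicate step is the naturality of the structure maps of $\U F$ along inclusions $W\subset W'$, together with compatibility of the tensor structure of $\U$ with these maps. These hold because $\U F$ is a functor and the tensor structure of $\U$ is natural in $W$. I do not expect a genuine obstacle: the main work is the bookkeeping verifying that the colimit of tensor functors along a filtered diagram of monoidal natural transformations (the maps induced by $W\subset W'$) is again a tensor functor.
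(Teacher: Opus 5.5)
Your proof is correct and is essentially the paper's argument. For infinite dimensional $V$ the paper also defines $\mathrm{ev}_V(F)=\colim_U \mathrm{ev}_U(F)$ over the filtered poset of finite dimensional subspaces $U\subset V$, and obtains the tensor structure from the canonical isomorphism $\colim_U(\mathrm{ev}_U F\otimes \mathrm{ev}_U G)\simeq \mathrm{ev}_V F\otimes \mathrm{ev}_V G$. The Schur-algebra alternative you mention is not needed, and its formula is left incomplete, so you can drop it.
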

\begin{proof}
If $V$ is finite dimensional, then $\ev_V$ is nothing but the exact tensor functor described in section \ref{subsubsec:1}. If $V$ is infinite dimensional, we define the exact functor $\ev_V$ by the formula $\ev_V(F):=\colim_U \ev_U(F)$ where the colimit is taken over the filtered poset of finite dimensional vector subspaces $U\subset V$ ordered by inclusion. The tensor structure of $\ev_V$ is obtained similarly by taking the colimit over $U$ of the tensor structures of $\ev_U$ and by noticing that $\colim_U (\ev_U(F)\otimes ev_U(G))$ is canonically isomorphic to  $\ev_V(F)\otimes \ev_V(G)$.   
\end{proof}

The following lemma proves assertion \eqref{item2bis} of theorem \ref{thm:maincorps}.

\begin{lm}
If $\I$ is a tensor ideal of an admissible tensor abelian category $\A$, then for all objects $A$ of $\A/\I$ there is an exact tensor functor $\Phi:\PP\to \A/\I$ such that $\Phi(I)=A$.    
\end{lm}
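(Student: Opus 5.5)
The argument lifts $A$ to $\A$, applies admissibility of $\A$ there, and composes with the quotient functor. Let $q:\A\to\A/\I$ denote the quotient functor. I take the standard setup in which a tensor ideal $\I$ is a Serre subcategory of $\A$ stable under tensoring with arbitrary objects. The quotient $\A/\I$ is then the Serre quotient: it has the same objects as $\A$, it is abelian, and $q$ is exact and the identity on objects. The tensor product of $\A$ descends to $\A/\I$, making it a tensor abelian category and $q$ a strict tensor functor. I will recall this briefly; the only point to check is that the induced product is exact in each variable, which holds because $q$ is exact and essentially surjective, and $A\otimes-$ preserves $\I$.

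Let $A$ be an object of $\A/\I$. Since $q$ is the identity on objects, $A=q(A)$ where $A$ is viewed as an object of $\A$. By admissibility of $\A$ there is an exact tensor functor $\Phi_0:\PP\to\A$ with $\Phi_0(I)=A$. Set $\Phi:=q\circ\Phi_0$. This is a composite of exact tensor functors, hence an exact tensor functor, and $\Phi(I)=q(A)=A$. That proves the lemma.

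The main obstacle is to make sure the notion of quotient by a tensor ideal used in the paper really produces an exact tensor functor $q$ that is surjective on objects. If "quotient" is meant in a more general sense, for instance a localization where objects of $\A/\I$ are only isomorphic to images of objects of $\A$, then the same argument yields $\Phi(I)\simeq A$. In that case I would transport the tensor structure along this isomorphism to get a functor with $\Phi(I)=A$ on the nose: choose the isomorphism $\theta:q(A_0)\to A$, replace $\Phi$ by the functor equal to $\Phi$ except on objects where one substitutes $A^{\otimes d}$ for $\Phi(I^{\otimes d})$ via $\theta^{\otimes d}$, and conjugate morphisms and $\nabla$ accordingly. Since the paper works up to isomorphism throughout, this is harmless.
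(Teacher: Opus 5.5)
Your proof is correct and is exactly the paper's argument: since $\A/\I$ has the same objects as $\A$, admissibility of $\A$ gives an exact tensor functor $\PP\to\A$ sending $I$ to $A$, and composing it with the exact tensor quotient functor $\pi:\A\to\A/\I$ gives the required $\Phi$. Your extra remarks on the tensor structure of the Serre quotient and on transporting along an isomorphism are harmless but unnecessary, because the paper takes the identity-on-objects quotient and its exact tensor functor $\pi$ as given.
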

\begin{proof}
By definition $\A/\I$ and $\A$ have the same objects. Since $\A$ is admissible, there is an exact tensor functor $\overline{\Phi}:\PP\to \A$ such that $\overline{\Phi}(I)=A$. If $\pi:\A\to \A/\I$ denotes the quotient map, then $\Phi=\pi\circ\overline{\Phi}$ is the sought-after exact tensor functor.   
\end{proof}

It remains to prove the assertion \eqref{item3bis} of theorem \ref{thm:maincorps}. For all objects $A$ of an arbitrary tensor abelian category $\A$, lemma \ref{lm:extensionT} gives a tensor functor $\Phi_\T:\T\to \A$ such that $\Phi(I)=A$. By the following lemma, this tensor functor extends to $\G$.
\begin{lm}\label{lm:extTG}
Let $\A$ be an arbitrary tensor abelian category. Every tensor functor $\Phi_\T:\T\to \A$ extends to a tensor functor $\Phi_\G:\G\to \A$.
\end{lm}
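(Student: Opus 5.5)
We define $\Phi_\G$ on objects by kernels, exactly as the uniqueness proof of lemma \ref{lmintro:unique} suggests. For a tuple $\mu$ of weight $d$, set
\[\Phi_\G(\Gamma^\mu):=\mathrm{Ker}\Big(A^{\otimes d}\xrightarrow{\prod \id-\mathrm{can}(\sigma)}\bigoplus_{\sigma\in\Si_\mu}A^{\otimes d}\Big),\]
where $A=\Phi_\T(I)$ and we identify $\Phi_\T(I^{\otimes d})$ with $A^{\otimes d}$ via $\nabla_d$. We denote by $j_\mu:\Phi_\G(\Gamma^\mu)\to A^{\otimes d}$ the inclusion. Equivalently, $\Phi_\G(\Gamma^\mu)$ is the object of $\Si_\mu$-invariants of $A^{\otimes d}$.

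On morphisms we use injectivity of $I^{\otimes e}$. Given $u:\Gamma^\mu\to\Gamma^\nu$, choose $\overline{u}:I^{\otimes d}\to I^{\otimes e}$ with $i_\nu u=\overline{u}i_\mu$. If $d\neq e$ then $u=0$, so we may assume $d=e$ and regard $\overline u$ as an element of $\kk\Si_d$.

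The key claim is that $\Phi_\T(\overline{u})$ maps $\Phi_\G(\Gamma^\mu)$ into $\Phi_\G(\Gamma^\nu)$, and that the induced map does not depend on the choice of $\overline u$. Both statements reduce to one assertion: if $x\in\kk\Si_d$ satisfies $x\circ i_\mu$ factoring through $\Gamma^\nu$ (respectively $x\circ i_\mu=0$) in $\PP$, then $\mathrm{can}(x)\circ j_\mu$ factors through $j_\nu$ (respectively vanishes) in $\A$. Here we use the structure of $\Hom_\PP(\Gamma^\mu,I^{\otimes d})$. Since $\Gamma^\mu$ is projective and $\Gamma^\mu=(I^{\otimes d})^{\Si_\mu}$, this Hom space is identified with $(\kk\Si_d)^{\Si_\mu}$, or more concretely with $\kk[\Si_d/\Si_\mu]$: restriction $x\mapsto x\circ i_\mu$ is the map $\kk\Si_d\to \kk\Si_d\otimes_{\kk\Si_\mu}\kk$. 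So $x\circ i_\mu=0$ means that $x$ lies in the right ideal generated by the elements $\sigma-1$ for $\sigma\in\Si_\mu$. Any such element kills $\Si_\mu$-invariants in any tensor category, which proves the vanishing statement.

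For the factoring statement, note that $\Hom(\Gamma^\mu,\Gamma^\nu)$ is spanned by the composites $\Gamma^\mu\to I^{\otimes d}\to\Gamma^\nu$ built from relative norm maps. In $\kk\Si_d$ these are the elements $\sum_{g\in \Si_\nu w\Si_\mu/\Si_\mu} g$ attached to double cosets $w$. Such elements visibly send $\Si_\mu$-invariants to $\Si_\nu$-invariants in any tensor abelian category, since they are averaged over $\Si_\nu$ on the left. Rather than use this combinatorial description, I would try to phrase everything functorially through Hom-spaces, so that it transfers verbatim to $\A$. Making this identification precise in a way that works in $\A$ is the step I expect to be the main obstacle.

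Functoriality and linearity of $\Phi_\G$ then follow from those of $\Phi_\T$, by restriction. For the tensor structure, $\nabla=\id$ on $A^{\otimes d}\otimes A^{\otimes e}$ restricts to a map
\[\Phi_\G(\Gamma^\mu)\otimes\Phi_\G(\Gamma^\nu)\to \Phi_\G(\Gamma^{\mu}\otimes\Gamma^\nu)=\Phi_\G(\Gamma^{\mu\nu}).\]
This map is an isomorphism because the tensor product is exact in each variable: a tensor product of kernels is the kernel of the corresponding map, and the $\Si_\mu\times\Si_\nu$-invariants are the invariants of each factor separately. The coherence axioms hold because they hold for $\Phi_\T$ and $j_\mu$ is a monomorphism, the same way the squares \eqref{eqn:squaresUbis} were deduced from \eqref{eqn:squaresU}. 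Finally, $\Phi_\G$ restricts to $\Phi_\T$ since $\Gamma^{(1,\dots,1)}=I^{\otimes d}$ and $\Si_{(1,\dots,1)}$ is trivial.
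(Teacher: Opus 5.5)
Your construction has the same shape as the paper's: $\Phi_\G(\Gamma^\mu)=\mathrm{Ker}\,\Phi_\T(d_\mu)$, morphisms obtained by restricting $\Phi_\T$ of a lift, and $\nabla_\G$ obtained by restricting $\nabla_\T$. Your argument that $\nabla_\G$ is an isomorphism is fine. But there is a genuine gap at the one nontrivial step: proving that $\Phi_\T(\overline u)\circ j_\mu$ factors through $j_\nu$, and that the resulting map does not depend on $\overline u$. You call this the main obstacle yourself. Your double-coset sketch rests on a description of $\Hom_\PP(\Gamma^\mu,\Gamma^\nu)$ that you neither prove nor commit to. The vanishing claim is also under-justified: projectivity of $\Gamma^\mu$ does not identify the kernel of $x\mapsto x\circ i_\mu$ on $\kk\Si_d$. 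Surjectivity of that map comes from injectivity of $I^{\otimes d}$, and pinning down the kernel would need an extra input, such as a dimension count of $\Hom_\PP(\Gamma^\mu,I^{\otimes d})$ via weight spaces.

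The missing idea is to use injectivity of the \emph{second} term of the copresentation \eqref{eqn:copres}, not only the first. Equivalently, lift $u$ to a morphism of copresentations, as the paper does. Write $0\to\Gamma^\mu\xrightarrow{i_\mu}J^0_\mu\xrightarrow{d_\mu}J^1_\mu$ with $J^0_\mu=I^{\otimes d}$ and $J^1_\mu=\bigoplus_{\sigma\in\Si_\mu}I^{\otimes d}$. Let $\Phi_\T$ also denote its additive extension to the category $\T^\oplus$ of finite direct sums of tensor powers of $I$.

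\emph{Factorization.} Since $d_\nu\overline u\,i_\mu=d_\nu i_\nu u=0$, the map $d_\nu\overline u$ factors through $J^0_\mu/\Gamma^\mu\cong\mathrm{im}\,d_\mu\subset J^1_\mu$. Injectivity of $J^1_\nu$ then gives $u^1:J^1_\mu\to J^1_\nu$ with $d_\nu\overline u=u^1 d_\mu$. This identity lives in $\T^\oplus$, so you may apply $\Phi_\T$ to get $\Phi_\T(d_\nu)\Phi_\T(\overline u)j_\mu=\Phi_\T(u^1)\Phi_\T(d_\mu)j_\mu=0$, which is exactly the required factorization through $j_\nu$.

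\emph{Independence.} Run the same argument with $J^0_\nu$ in place of $J^1_\nu$. If $x\,i_\mu=0$, then $x=h\,d_\mu$ for some $h:J^1_\mu\to J^0_\nu$, so $\Phi_\T(x)j_\mu=\Phi_\T(h)\Phi_\T(d_\mu)j_\mu=0$. This also recovers your description of the kernel of restriction, with no dimension count.

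This is the Hom-free formulation you were looking for. It uses only that the copresentation lies in $\T^\oplus$ and consists of injectives, and the paper packages it as $\Phi_\G(u)=H^0(\Phi_\T(u^\bullet))$ with $u^\bullet$ unique up to homotopy. Your double-coset route could also be completed. You would check that $hN_w-N_w$ lies in the left ideal of $\kk\Si_d$ generated by the $\tau-1$, $\tau\in\Si_\mu$, for every $h\in\Si_\nu$. You would then invoke the double-coset basis of $\Hom_\PP(\Gamma^\mu,\Gamma^\nu)$. That route, however, imports considerably more structure of $\PP$ than the injectivity of $I^{\otimes d}$.
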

\begin{proof}
Let us rewrite as $0\to \Gamma^\mu \xrightarrow[]{i_\mu}J^0_\mu \xrightarrow[]{d_\mu} J^1_\mu$ the injective copresentation of $\Gamma^\mu$ by direct sums of copies of $I^{\otimes d}$ given by $\eqref{eqn:copres}$. Every morphism $u:\Gamma^\mu\to \Gamma^\nu$ lifts to a morphism $u^\bullet=(u^0,u^1)$ of copresentations, unique up to homotopy.  
Let $\T^\oplus$ be the full subcategory of $\PP$ whose objects are direct sums of copies of tensor powers. The linear functor $\Phi_\T$ extends to $\T^\oplus$, and we still denote by $\Phi_\T$ the resulting functor.

We can now define the linear functor $\Phi_\G:\G\to \A$ by `right deriving' $\Phi_\T$. Namely, we define $\Phi_\G(\Gamma^\mu):=\mathrm{Ker}\,\Phi_\T(d_\mu)$. Note that the kernel is only unique up to isomorphism, so we choose one representative for each $\mu$. For $\Gamma^\mu=I^{\otimes d}$ we choose it in order that $\Phi_\G(I^{\otimes d})=\Phi_\T(I^{\otimes d})$. For all morphisms $u:\Gamma^\mu\to \Gamma^\nu$, we define $\Phi_\G(f):= H^0(\Phi_\T(u^\bullet))$. Since $u^\bullet$ is unique up to homotopy  and since $\Phi_\T$ is $\kk$-linear, $\Phi_\G(u)$ does not depend on the choice of $u^\bullet$. This implies that $\Phi_\G(u)$ is well-defined and that $\Phi_\G$ is indeed a linear functor extending $\Phi_\T$. Note that by definition, $\Phi_\G(i_\mu)$ is the canonical inclusion of $\Phi_\G(\Gamma^\mu)=\mathrm{Ker}\,\Phi_\T(d_\mu)$ into $\Phi_\T(J^0_\mu)$.

It remains to define the tensor structure of $\Phi_\G$. For all tuples $\mu$ and $\nu$, we define $\nabla_\G$ as the unique map fitting into the following diagram, whose exact rows are induced by the maps $i_\mu$, $i_\nu$, $d_\mu$ and $d_\nu$:
\begin{equation*}
\begin{tikzcd}
0\to \Phi_\G(\Gamma^\mu)\otimes \Phi_\G(\Gamma^\nu)\ar{r}\ar{d}{\nabla_\G}& \Phi_\T(J^0_\mu)\otimes \Phi_\T(J^0_\nu)\ar{r}\ar{d}{\nabla_\T}&  
{\begin{array}{c}\Phi_\T(J^0_\mu)\otimes \Phi_\T(J^1_\nu)\\
\oplus \Phi_\T(J^1_\mu)\otimes \Phi_\T(J^0_\nu)
\end{array}}\ar{d}{\nabla_\T\oplus \nabla_\T}\\
0\to \Phi_\G(\Gamma^\mu\otimes\Gamma^\nu)\ar{r}& \Phi_\T(J^0_\mu\otimes J^0_\nu)\ar{r}&  
{\begin{array}{c}\Phi_\T(J^0_\mu\otimes J^1_\nu)\\
\oplus \Phi_\T(J^1_\mu\otimes J^0_\nu)
\end{array}}
\end{tikzcd}
\end{equation*}
Then $\nabla_\G$ extends $\nabla_\T$, that is, if $\Gamma^\mu= I^{\otimes d}$ and $\Gamma^\nu=I^{\otimes e}$, then $\nabla_\G=\nabla_\T$.
One checks that $\nabla_\G$ is natural with respect to $\Gamma^\mu$ and $\Gamma^\nu$ and satisfies the axioms of a tensor structure (that is, the identities $\nabla\circ(\nabla\otimes\id)=\nabla\circ(\id\otimes\nabla)$ and $\nabla\circ\tau=\Phi(\tau)\circ \nabla$) in the same way as one checks the properties of $f_\G$ in the proof of lemma \ref{lmintro:unique}. To be more specific, the left hand side square of the diagram defining $\nabla_\G$ expresses it as the restriction of $\nabla_\T$ to the subobjects $\Phi_\G(\Gamma^\mu)\otimes \Phi_\G(\Gamma^\nu)$ and $\Phi_\G(\Gamma^\mu\otimes\Gamma^\nu)$, and the properties of $\nabla_\G$ follow by restricting the diagrams expressing that $\nabla_\T$ is natural in $J^0_\mu$ and $J^0_\nu$ and satisfies the axioms of a tensor structure.   
\end{proof}

Lemma \ref{lm:monextensions} shows that every tensor functor $\Phi_\G:\G\to \A$ extends to a right exact tensor functor $\Phi:\PP\to \A$. In general, even for the nicest case $\A=\Vect$, there is no reason why $\Phi$ should be exact (as a concrete example of this phenomenon, take e.g. the right exact tensor functor $\Phi$ of remark \ref{rk:rightexactnonunique} and $\Phi_\G$ its restriction to $\G$). Nonetheless, if $\A$ satisfies the hypothesis given in theorem \ref{thm:maincorps}\eqref{item3bis}, then it is possible to find a certain extension $\Phi_\G:\G\to \A$ (actually the specific one contructed in the proof of lemma \ref{lm:extTG}) which in turn extends to an exact functor $\Phi:\PP\to \A$. 
This is the content of the following lemma, which finishes the proof of theorem \ref{thm:maincorps}.

\begin{lm}
Assume that there is a family of tensor functors $(e_x:\A\to \B_x)_{x\in X}$ where the $\B_x$ are admissible tensor abelian categories, such that a diagram $0\to A'\to A\to A''\to 0$ in $\A$ is a short exact sequence if and only if $e_x$ sends it to a short exact sequence in $\B_x$ for all $x\in X$. Then every tensor functor $\Phi_\T:\T\to \A$ extends to an exact tensor functor $\Phi:\PP\to \A$.    
\end{lm}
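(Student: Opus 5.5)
We construct $\Phi$ in three steps: extend $\Phi_\T$ to the specific $\Phi_\G$ of lemma \ref{lm:extTG}, extend that to a right exact tensor functor $\Phi:\PP\to\A$ by lemma \ref{lm:monextensions}, and then prove exactness by testing through the functors $e_x$.

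For each $x\in X$, set $A_x:=e_x(\Phi_\T(I))$. Since $\B_x$ is admissible, there is an exact tensor functor $\Psi_x:\PP\to\B_x$ with $\Psi_x(I)=A_x$. The goal is a natural isomorphism of tensor functors $e_x\circ\Phi\simeq\Psi_x$. Granting it, the proof finishes as follows. Let $0\to F'\to F\to F''\to 0$ be exact in $\PP$. We already know $\Phi F'\to\Phi F\to\Phi F''\to 0$ is exact. Complete it to the diagram $0\to K\to\Phi F'\to\Phi F\to\Phi F''\to 0$, where $K$ is the kernel, and apply $e_x$. By the hypothesis applied to short exact sequences, each $e_x$ is exact: any exact sequence splits into short ones, and the "only if" direction applies to each. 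So $e_x$ turns the diagram into $0\to e_xK\to\Psi_xF'\to\Psi_xF\to\Psi_xF''\to 0$, and $\Psi_x$ exact forces $e_xK=0$ for all $x$. Applying the hypothesis to $0\to K\xrightarrow{\id}K\to 0\to 0$, which $e_x$ sends to the short exact sequence $0\to 0\to 0\to 0\to 0$, gives $K=0$. Hence $\Phi$ is exact.

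To build the isomorphism $e_x\circ\Phi\simeq\Psi_x$, first compare on $\T$. Both $e_x\circ\Phi_\T$ and $\Psi_{x|\T}$ are tensor functors $\T\to\B_x$ whose values on $I$ agree. By lemma \ref{lm:extensionT}, the identity of $A_x$ extends to a morphism of tensor functors between them, which is an isomorphism because its inverse arises the same way. Next pass to $\G$. This is where we use the specific choice in lemma \ref{lm:extTG}: $\Phi_\G(\Gamma^\mu)=\mathrm{Ker}\,\Phi_\T(d_\mu)$, and $e_x$ is exact, hence left exact. So $e_x\Phi_\G(\Gamma^\mu)=\mathrm{Ker}\,e_x\Phi_\T(d_\mu)$. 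Likewise $\Psi_x(\Gamma^\mu)=\mathrm{Ker}\,\Psi_x(d_\mu)$, because $\Psi_x$ is exact and \eqref{eqn:copres} is exact. The isomorphism on $\T$ therefore induces isomorphisms on these kernels, exactly as in diagram \eqref{eqn:extU}. The restriction argument from the proof of lemma \ref{lmintro:unique} then shows these form an isomorphism of tensor functors $e_x\circ\Phi_\G\simeq\Psi_{x|\G}$.

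Finally, extend to $\PP$. The functor $e_x\circ\Phi$ is right exact, being exact after right exact, and extends $e_x\circ\Phi_\G$. By lemma \ref{lm:extensions}(2), the isomorphism on $\G$ extends uniquely to a natural transformation $e_x\circ\Phi\to\Psi_x$. It is an isomorphism: both functors are right exact and every object has a presentation $P_1\to P_0\to F\to 0$ by objects of $\G^\oplus$, so the five lemma applies. By lemma \ref{lm:monextensions}(2) it is moreover tensor, although the exactness argument above only needs it to be natural.

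The main obstacle is the step on $\G$. For an arbitrary extension $\Phi_\G$, the functor $e_x\circ\Phi_\G$ need not agree with $\Psi_x$ on divided powers; for instance, remark \ref{rk:rightexactnonunique} shows extensions on $\G$ can differ. The argument therefore depends on $\Phi_\G$ being defined by kernels, and on $e_x$ commuting with those kernels.
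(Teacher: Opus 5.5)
Your proposal follows the paper's proof essentially step for step. The route is the same: take the kernel-defined $\Phi_\G$ of lemma \ref{lm:extTG}, build the comparison $e_x\circ\Phi_\G\simeq\Psi_{x|\G}$ from the kernel diagram and the restriction argument, extend it to $\PP$ using right exactness, and conclude by reflection of exactness. You also correctly observe that the final isomorphism only needs to be natural, not tensor.

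There is one slip, in the last step. The diagram $0\to K\xrightarrow{\id}K\to 0\to 0$ is a short exact sequence for every object $K$, so applying the hypothesis to it tells you nothing about $K$. You should use $0\to K\to 0\to 0\to 0$ instead: it is short exact if and only if $K=0$, and $e_x$ sends it to a short exact sequence because $e_xK=0$.

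More simply, you can drop $K$ entirely, which is what the paper does. Apply the hypothesis directly to $0\to\Phi F'\to\Phi F\to\Phi F''\to 0$. For each $x$, its image under $e_x$ is isomorphic, via $e_x\circ\Phi\simeq\Psi_x$, to the image under $\Psi_x$ of the original sequence. That image is short exact because $\Psi_x$ is exact, so the hypothesis gives exactness in $\A$ at once.
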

\begin{proof}
Let $\Phi_\G$ be the extension of $\Phi_\T$ constructed in the proof of lemma \ref{lm:extTG}. 
Since the $\B_x$ are admissible, for all $x$ there is a (unique) exact tensor functor $\Phi_x:\PP\to \B_x$ such that $\Phi_x(I)=e_x(\Phi_\T(I))$. Let $\Phi_{x|\C}:\C\to \B_x$ denote the restriction of $\Phi_x$ to a subcategory $\C$ of $\PP$.  We first claim that there is an isomorphism of tensor functors:
\begin{equation}\label{eqn:isoclef}
f_\G:\Phi_{x|\G} \xrightarrow[]{\simeq} e_x\circ \Phi_\G\;.
\end{equation}
Indeed, lemma \ref{lm:extensionT} yields an isomorphism of tensor functors $f_\T:\Phi_{x|\T}\xrightarrow[]{\simeq} e_x\circ \Phi_\T$. This latter extends to an isomorphism of tensor functors $f_\G$ by the same reasoning as in the proof of lemma \ref{lmintro:unique}. Namely, we define the isomorphism $f_\G$ as the unique map fitting into the commutative diagram with exact rows:
\begin{equation*}
\begin{tikzcd}
0\to \Phi_{x}(\Gamma^\mu)\ar{r}\ar[dashed]{d}{f_\G}[swap]{\simeq}& \Phi_x(J^0_\mu)\ar{r}\ar{d}{f_\T}[swap]{\simeq}&\Phi_x(J^1_\mu)\ar{d}{f_\T}\\
0\to e_x(\Phi_\G(\Gamma^\mu))\ar{r}& e_x(\Phi_\T(J^0_\mu))\ar{r}&e_x(\Phi_\T(J^1_\mu))
\end{tikzcd}.
\end{equation*}
(In this diagram $0\to \Gamma^\mu\to J^0_\mu\to J_1^\mu$ denotes the injective copresentation of $\Gamma^\mu$ given by \eqref{eqn:copres}, and $\Phi_\T$ denotes the extension of $\Phi_\T$ to $\T^\oplus$. Exactness of the rows follow from exactness of $\Phi_x$ and $e_x$ and the construction of $\Phi_\G$ in the proof of lemma \ref{lm:extTG}.) The left hand side square of the diagram exhibits $f_\G$ as the restriction of the isomorphism $f_\T:\Phi_x(J^0_\mu)\to e_x(\Phi_\T)$ to the subobjects $\Phi_x(\Gamma^\mu)$ and $e_x(\Phi_\G(\Gamma^\mu))$. The fact that $f_\G$ is natural with respect to $\Gamma^\mu$ and that it commutes with $\nabla$ follows by restricting to $f_\G$ the diagrams expressing that $f_\T$ is   natural with respect to $J^0_\mu$ and that $f_\T$ commutes with $\nabla$.

Let us now denote by $\Phi:\PP\to \A$ the right exact extension of $\Phi_\G$ provided by lemma \ref{lm:monextensions}. The right exact tensor functors $\Phi_x$ and $e_x\circ \Phi$ have isomorphic restrictions to $\G$ by the isomorphism \eqref{eqn:isoclef} hence they are isomorphic by lemma \ref{lm:monextensions}. Thus $e_x\circ \Phi$ is exact for all $x$. Since the functors $e_x$ reflect exactness, this implies that the tensor functor $\Phi$ is exact.  
\end{proof}

\section{Proof of proposition \ref{printro:factorization}}
Let $\A$ is a category modeled on vector spaces, and let us fix a family of test functors $(e_x:\A\to \Vect)_{x\in X}$. Thus the $e_x$ are tensor functors, and the functor 
\[e:=\prod_{x\in X}e_x\,:\, \A\to \prod_{x\in X}\Vect\]
reflects exactness. 
Let $\Phi:\PP\to \A$ be an arbitrary exact tensor functor. By lemmas \ref{lm:classifVect} and \ref{lmintro:unique}, each exact tensor functor $e_x\circ \Phi$ is isomorphic to $\ev_{V_x}$ where $V_x= e_x(\Phi(I))$. 
Since $e$ reflects exactness, it is faithful, hence we have: 
\begin{equation}\label{eqn:Ker}
\mathrm{Ker}\,\Phi = \mathrm{Ker}\, e\circ \Phi = \bigcap_{x\in X}\mathrm{Ker}\,\ev_{V_x}\;.    
\end{equation}
Recall that $\I_d=\mathrm{Ker}\,\ev_{\kk^d}$ for all integers $d$, and that $\I_{+\infty}:=0=\bigcap_{d\ge 0} \I_d$. Notice that if $U$ is a direct summand of $V$ then $\ev_U$ is a retract of $\ev_V$, hence $\mathrm{Ker}\,\ev_V \subset \mathrm{Ker}\,\ev_U$. 
This implies that the right hand side of \eqref{eqn:Ker} equals $\I_n$, where $n=\sup\{\dim V_x\,|\, x\in X\}$, which finishes the proof of proposition \ref{printro:factorization}.

\printbibliography

\end{document}